\newcommand\cyr{%
 \renewcommand\rmdefault{wncyr}%
 \renewcommand\sfdefault{wncyss}%
 \renewcommand\encodingdefault{OT2}%
\normalfont\selectfont} \DeclareTextFontCommand{\textcyr}{\cyr}
\newtheorem{theorem}{Theorem}
\newtheorem{lemma}[theorem]{Lemma}
\newtheorem{proposition}[theorem]{Proposition}
\theoremstyle{remark}
 \newtheorem{remark}[theorem]{Remark}
\def\Z{\mathbb Z}
\def\N{\mathbb N}
\def\Q{\mathbb Q}
\def\C{\mathbb C}
\def\F{\mathbb F}
\def\cP{\mathcal P}
\def\cO{\mathcal O}
\def\cE{\mathcal E}
\def\cF{\mathcal F}
\def\cM{\mathcal M}
\def\fp{\mathfrak p}
\def\fP{\mathfrak P}
\def\deg{\operatorname{deg}}
\def\det{\operatorname{det}}
\def\End{\operatorname{End}}
\def\Gal{\operatorname{Gal}}
\def\Cl{\operatorname{Cl}}
\def\mod{\operatorname{mod}}
\def\disc{\operatorname{disc}}
\def\gcd{\operatorname{gcd}}
\def\GL{\operatorname{GL}}
\def\SL{\operatorname{SL}}
\def\PGL{\operatorname{PGL}}
\def\O{\operatorname{O}}
\def\o{\operatorname{o}}
\def\log{\operatorname{log}}
\def\ord{\operatorname{ord}}
\def\rank{\operatorname{rank}}
\def\ds{\displaystyle}
\def\Tr{\operatorname{Tr}}
\newcommand{\Fi}{F_\infty}
\newcommand{\Ci}{\mathbb{C}_\infty}
\newcommand{\alg}{\mathrm{alg}}
\begin{document}

\title{The growth of the discriminant of the endomorphism ring of the reduction of a rank 2 generic Drinfeld module}

%\date{}

%Author info
\author{
Alina Carmen Cojocaru and Mihran Papikian}
\address[Alina Carmen  Cojocaru]{
\begin{itemize}
\item[-]
Department of Mathematics, Statistics and Computer Science, University of Illinois at Chicago, 851 S Morgan St, 322
SEO, Chicago, 60607, IL, USA;
\item[-]
Institute of Mathematics  ``Simion Stoilow'' of the Romanian Academy, 21 Calea Grivitei St, Bucharest, 010702,
Sector 1, Romania
\end{itemize}
} \email[Alina Carmen  Cojocaru]{cojocaru@uic.edu}
\address[Mihran Papikian]{
\begin{itemize}
\item[-]
Department of Mathematics, 
Pennsylvania State University,
University Park, PA 16802, USA
\end{itemize}
} \email[Mihran Papikian]{papikian@psu.edu}

\renewcommand{\thefootnote}{\fnsymbol{footnote}}
\footnotetext{\emph{Key words and phrases:} 
Drinfeld modules, orders, endomorphism rings}
\renewcommand{\thefootnote}{\arabic{footnote}}

\renewcommand{\thefootnote}{\fnsymbol{footnote}}
\footnotetext{\emph{2010 Mathematics Subject Classification:} 11G09, 11R58 (Primary), 11R29, 11R44 (Secondary)}
\renewcommand{\thefootnote}{\arabic{footnote}}

\thanks{A.C.C. was partially supported  by  a Collaboration Grant for Mathematicians from the Simons Foundation  
under Award No. 318454.}

\thanks{M.P.  was partially supported by a Collaboration Grant for Mathematicians from the Simons Foundation 
under Award No. 637364.}

\begin{abstract}
Let $\psi: A \to F\{\tau\}$ be a Drinfeld $A$-module over $F$ of rank 2 and without complex multiplication, 
where $A = \F_q[T]$, $F = \F_q(T)$,
and $q$ is an odd prime power.
For a prime $\fp = p A$  of $A$ of good reduction for $\psi$ and with residue field $\F_{\fp}$, we study the growth of the 
absolute value $|\Delta_{\fp}|$ of the discriminant of the $\F_{\fp}$-endomorphism ring of the reduction of
$\psi$ modulo $\fp$. We prove that for all $\fp$,  $|\Delta_{\fp}|$ grows with $|p|$.
Moreover, we prove that for a density 1 of primes $\fp$,
$|\Delta_{\fp}|$ is as close as possible to its upper bound $|a_{\fp}^2 - 4 \mu_{\fp}p|$,
where
$X^2+a_{\fp}X+\mu_{\fp}  p \in A[X]$
is the characteristic polynomial of  $\tau^{\deg p}$.
\end{abstract}

\maketitle

%\tableofcontents

%*************************************
%*************************************
%*************************************
\section{Introduction}
%*************************************
%*************************************
%*************************************

Let $\F_q$ be a finite field with $q$ elements, 
$A:=\F_q[T]$ be the ring of polynomials in $T$ over $\F_q$, 
$F:=\F_q(T)$ be the field of fractions of $A$,
and $F^\alg$ be a fixed algebraic closure of $F$.  
We call a nonzero prime ideal of $A$ simply a \textit{prime} of $A$. 
The main results of this paper concern the 
reductions modulo the primes of $A$ of a fixed Drinfeld module over $F$. To state these results, we 
first recall some basic concepts from the theory of Drinfeld modules. 

An \textit{$A$-field} is a field $L$ equipped with a homomorphism $\gamma: A\to L$. 
Two $A$-fields of particular prominence in this paper are $F$ and $\F_\fp:=A/\fp$, 
where $\fp\lhd A$ is a prime. 
When $L$ is an extension of $F$, 
we always  implicitly assume that $\gamma: A\hookrightarrow L$ is obtained from the natural embedding of $A$ into its field of fractions 
$A\hookrightarrow F \hookrightarrow L$;
when $L$ is a finite extension of $\F_\fp$, 
we always  implicitly assume that $\gamma: A\to A/\fp\hookrightarrow L$ is obtained 
from the natural quotient map. 

For an $A$-field $L$,  denote by $L\{\tau\}$ 
 the noncommutative ring of polynomials in $\tau$ with coefficients in $L$ 
and subject to the commutation rule $\tau c=c^q\tau$, $c\in L$. 
A \textit{Drinfeld module of rank $r\geq 1$ defined over $L$} is a ring homomorphism 
$\psi: A \to L\{\tau\} $, $a\mapsto\psi_a$, 
uniquely determined by the image of $T$:
$$
\psi_T=\gamma(T)+\sum_{1 \leq i \leq r} g_i(T) \tau^i,\quad g_r(T)\neq 0. 
$$
The \textit{endomorphism ring} of $\psi$ is the centralizer of the image $\psi(A)$ of $A$ in $L\{\tau\}$:
\begin{align*}
\End_L(\psi) &:=\{u\in L\{\tau\}: u\psi_a=\psi_a u \text{ for all }a\in A\}\\  & =\{u\in L\{\tau\}: u\psi_T=\psi_T u\}. 
\end{align*}
As $\End_L(\psi)$ contains $\psi(A)\cong A$ in its center, it is an $A$-algebra. 
It can be shown that $\End_L(\psi)$ is a free $A$-module of rank $\leq r^2$; see \cite{Dr74}. 

Now let $\psi: A\to F\{\tau\}$ be a Drinfeld module of rank $r$ over $F$ defined by 
$$
\psi_T=T+g_1\tau+\ldots+g_r\tau^r. 
$$
We say that a prime $\fp\lhd A$ is a \textit{prime of good reduction} for $\psi$ 
if $\ord_\fp(g_i)\geq 0$ for  all $1\leq i\leq r-1$
and 
if $\ord_\fp(g_r)=0$. 
In that case, we can consider $g_1, \dots, g_r$ as elements of the completion $A_\fp$ of $A$ at $\fp$. 
The \textit{reduction of $\psi$ at $\fp$} is the 
Drinfeld module $\psi\otimes\F_\fp: A \to \F_\fp\{\tau\}$ given by 
$$
(\psi\otimes\F_\fp)_T=\overline{T}+\overline{g_1}\tau+\cdots+\overline{g_r}\tau^r,  
$$
where $\overline{g}$ is the image of $g\in A_\fp$ under the canonical homomorphism $A_\fp\to A_\fp/\fp$. 
Note that $\psi\otimes\F_\fp$ has rank $r$ since $\overline{g_r}\neq 0$.
It can be shown that
 all but finitely many primes 
of $A$ are primes of good reduction for $\psi$; we denote the set of these primes by $\cP(\psi)$. 

Let $\fp=pA$ be a prime of good reduction of $\psi$, where $p$ denotes the monic generator of $\fp$. 
Denote by $\cE_{\psi, \fp}=\End_{\F_\fp}(\psi\otimes\F_\fp)$
the endomorphism ring of $\psi \otimes \F_\fp$. 
It is easy to see that $\pi_\fp:=\tau^{\deg p}$ is in the center of $\F_\fp\{\tau\}$, hence $\pi_\fp\in \cE_{\psi, \fp}$. 
Using the theory of Drinfeld modules over finite fields, it is easy to show that $A[\pi_\fp]$ and $\cE_{\psi, \fp}$ 
are $A$-orders in the imaginary field extension $F(\pi_\fp)$ of $F$ of degree $r$ (``imaginary" means that 
there is a unique place of $F(\pi_\fp)$ over the place $\infty:=1/T$ of $F$); see \cite{GaPa18}. 
Here we remind the reader that $A[\pi_\fp]=\psi(A)[\pi_\fp]$, i.e., $A\cong \psi(A)$ also denotes the image of $A$ under $\psi$. 
Then, denoting by $\cO_{F(\pi_\fp)}$ the integral closure of $A$ in $F(\pi_\fp)$, we obtain a natural inclusion of $A$-orders 
$$A[\pi_\fp]\subseteq \cE_{\psi, \fp} \subseteq \cO_{F(\pi_\fp)}.$$ 

It is an interesting problem, with important applications to the arithmetic of $F$, to compare the above orders 
as $\fp$ varies. For example, it is proved in \cite{CoPa15} (for $r=2$) and in \cite{GaPa18}, \cite{GaPa19} (for $r\geq 2$) 
that  $\cE_{\psi, \fp}/A[\pi_\fp]$ captures the splitting behavior of $\fp$ in the division fields of $\psi$, thus leading 
to (non-abelian) reciprocity laws. Also, in \cite{GaPa18}, it is shown that $\cE_{\psi, \fp}/A[\pi_\fp]$ and
 $\cO_{F(\pi_\fp)}/\cE_{\psi, \fp}$ 
can be arbitrarily large as $\fp$ varies, whereas in \cite{CoPa15} an explicit formula is given for the density of primes for which 
$A[\pi_\fp]=\cE_{\psi, \fp}$. 

In this paper, we are interested in the growth of the discriminant of $\cE_{\psi, \fp}$ as $\fp$ varies, 
and in applications of this growth to the arithmetic of $\psi$. 
Our results assume that $q$ is odd and $r=2$. These assumptions are to be kept from here on without further notice.

Choosing a basis $\cE_{\psi, \fp}=A\alpha_1+A\alpha_2$ of $\cE_{\psi, \fp}$ as a free $A$-module of rank $2$, 
the discriminant 
$\Delta_\fp:=\Delta_{\psi, \fp}$ of $\cE_{\psi, \fp}$ is $\det(\Tr_{F(\pi_\fp)/F}(\alpha_i\alpha_j))_{1\leq i, j\leq 2}$, which is well-defined up to a multiple 
by a square in $\F_q^\times$. 
If we write 
$\Delta_{\fp}=c_{\fp}^2\cdot \Delta_{F(\pi_\fp)}$, 
where $c_{\fp}, \Delta_{F(\pi_\fp)} \in A$  with $\Delta_{F(\pi_\fp)}$ square-free,
 then
$\cE_{\psi, \fp}=A+c_{\fp}\cO_{F(\pi_\fp)}$, $\Delta_{F(\pi_\fp)}$ is the discriminant of $\cO_{F(\pi_\fp)}$, and 
$\cO_{F(\pi_\fp)}/\cE_{\psi, \fp}\cong A/c_{\fp} A$ as $A$-modules.
Note that, similarly to $\Delta_{\fp}$, the polynomial $c_\fp$ also depends on $\psi$, although this will  
not be explicitly indicated in our notation.

Denote by $|\cdot|=|\cdot|_\infty$ the absolute value on $F$ corresponding to $1/T$, normalized 
so that that  $|a|=q^{\deg a}$ for $a\in A$, with $\deg a$ denoting the degree of $a$ as a polynomial in $T$
and
subject to the convention that $\deg 0=-\infty$. 
The first main result of this paper is the following: 

\begin{theorem}\label{thm1} Assume $\End_{F^\alg}(\psi)=A$. Then
	$$
	|\Delta_\fp|\gg_{\psi} \frac{\log |p|}{(\log\log |p|)^2},
	$$
	where the implied $\gg_{\psi}$-constant depends on $q$ and on the coefficients of the polynomial 
	$\psi_T \in F\{\tau\}$.
\end{theorem}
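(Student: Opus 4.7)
The plan is to exploit the identity
$\Delta_\fp = (a_\fp^2 - 4\mu_\fp p)/b_\fp^2$,
where $b_\fp := [\cE_{\psi,\fp}:A[\pi_\fp]]$ is the conductor of the Frobenius order in the full $\F_\fp$-endomorphism ring. The theorem then reduces to showing that $b_\fp$ cannot absorb too much of the Weil bound $|a_\fp^2 - 4\mu_\fp p| \leq |p|$; concretely, it suffices to establish an upper bound essentially of the shape $|b_\fp| \ll \log|p|/\log\log|p|$ together with a lower bound $|a_\fp^2 - 4\mu_\fp p| \geq |b_\fp|^2 \cdot (\log|p|/(\log\log|p|)^2)$. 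The strategy is to translate divisibility of $b_\fp$ by prime powers into splitting conditions in division fields of $\psi$, and then to constrain these via effective Chebotarev for the function field $F$.

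First I would invoke the Galois criterion established in the authors' earlier work: for a prime $\ell$ of $A$, a prime power $\ell^n$ divides $b_\fp$ if and only if $\Frob_\fp$ acts as a scalar in $\GL_2(A/\ell^n)$ on the torsion $\psi[\ell^n]$. Next, since $\End_{F^\alg}(\psi) = A$, Pink's openness theorem for the adelic Galois representation yields a finite exceptional set $S_\psi$ of primes of $A$ and an integer $M$ such that, for every $\ell \notin S_\psi$ and every $n \geq 1$, the image of Galois in $\GL_2(A/\ell^n)$ has index at most $M$. Writing $L_{\ell^n} \subset F(\psi[\ell^n])$ for the subfield fixed by the scalar matrices, we then have $[L_{\ell^n}:F] \asymp |\ell|^{3n}$ (with implicit constants depending only on $\psi$), and the divisibility $\ell^n \mid b_\fp$ forces $\fp$ to split completely in $L_{\ell^n}/F$.

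The crux of the argument is to combine these splitting conditions across all prime-power factors of $b_\fp$. Setting $L := \prod_{\ell^n \| b_\fp,\,\ell \notin S_\psi} L_{\ell^n}$, linear disjointness of the division fields at distinct primes $\ell$ (a consequence of non-isomorphism of the simple quotients of $\GL_2(\F_\ell)$ and $\GL_2(\F_{\ell'})$) gives $[L:F] \asymp |b_\fp|^3$ up to a bounded factor. Since $\fp$ splits completely in $L/F$, the effective Chebotarev theorem over function fields, which follows from Weil's Riemann Hypothesis applied to the smooth projective curve corresponding to $L$, forces $\deg p$ to dominate $\log_q[L:F]$ modulo a genus-controlled error. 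A conductor--discriminant computation, exploiting that $L/F$ is ramified only at primes in $S_\psi$, at bad-reduction primes of $\psi$, at the primes dividing $b_\fp$, and at $\infty$, estimates the genus of $L$ polynomially in the data; extracting the main term of the Weil bound should yield $|b_\fp| \ll \log|p|/\log\log|p|$. Substituting back into the identity then delivers the claimed lower bound.

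The principal technical hurdle is the genus estimate in the Chebotarev step: because each $L_{\ell^n}$ is wildly ramified at $\ell$, the genus of the compositum grows polynomially in $[L:F]$, and a delicate balancing of the main term against the error is needed to secure precisely the $\log|p|/(\log\log|p|)^2$ dependence rather than a weaker or stronger bound. A secondary obstacle is the ``near-extremal'' regime $|a_\fp|^2 = |p|$, in which $|a_\fp^2 - 4\mu_\fp p|$ may be much smaller than $|p|$ and the above scheme no longer suffices on its own; here a complementary argument is needed, probably exploiting the ordinary/supersingular dichotomy for $\psi \otimes \F_\fp$ together with a Deuring-style lifting obstruction preventing both $|a_\fp^2-4\mu_\fp p|$ and $|\Delta_\fp|$ from being simultaneously small, which would contradict the no-CM hypothesis via Pink's theorem.
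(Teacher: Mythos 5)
Your proposal takes a genuinely different route from the paper, but it contains a gap that appears fatal.

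The paper does not bound $b_\fp$ via Galois representations at all. Instead it lifts the reduction $\psi\otimes\F_\fp$ to a CM Drinfeld module $\Psi$ over $\Ci$ with $\End_{\Ci}(\Psi)=\cE_{\psi,\fp}$ (Proposition~\ref{prop-CMlift}, the Deuring-style lifting from \cite{CoPa15}), exploits $j(\psi)\neq j(\Psi)$ (from the no-CM hypothesis) to produce a nonzero element $n-m\,j(\Psi)\in\fP$, and bounds $|p|^{1/2}\leq|\Nr_{K(j(\Psi))/K}(n-m\,j(\Psi))|$ from above by a product over $\Cl(\cE_{\psi,\fp})$, with each factor controlled by Gekeler's growth estimate $\log_q|j(z)|<q^2|z|$ on the fundamental domain and each representative $z=(-b+\sqrt{\Delta_\fp})/2a$ satisfying $|z|\leq|\sqrt{\Delta_\fp}|$ via reduced-form theory. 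Together with the class-number bound $h(\cE_{\psi,\fp})\ll\sqrt{|\Delta_\fp|}(\deg\Delta_\fp)^2$, this yields $\deg p\ll_\psi|\Delta_\fp|(\deg\Delta_\fp)^2$, from which the theorem follows. The crucial point is that the argument bounds $|\Delta_\fp|$ directly without passing through $b_\fp$ and $a_\fp^2-4\mu_\fp p$ separately.

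Your Chebotarev strategy cannot deliver the claimed $|b_\fp|\ll\log|p|/\log\log|p|$. The constraint that $\fp$ splits completely in the compositum $L$ with $[L:F]\asymp|b_\fp|^3$, run through the Weil bound on the curve corresponding to $L$, gives at best $[L:F]\ll q^{\deg p}$, hence $|b_\fp|\ll|p|^{1/3}$ (and even reaching this requires the genus of $L$ to be controlled well enough that the error term does not dominate, which is itself delicate because of wild ramification, as you note). This is a polynomial bound, not a logarithmic one; the effective Chebotarev / least-prime argument over function fields simply does not produce savings of size $\log|p|$ in the exponent. Moreover, even granted a polynomial bound on $|b_\fp|$, your step 2 requires a lower bound on $|a_\fp^2-4\mu_\fp p|$, which you leave to a vague ``complementary argument.'' But since $|\Delta_\fp|\leq|a_\fp^2-4\mu_\fp p|$ always, a lower bound on $|a_\fp^2-4\mu_\fp p|$ of the same quality as Theorem~\ref{thm1} is essentially the theorem itself, so this reduction is circular. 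The near-extremal regime you flag as secondary is in fact where all the difficulty lives, and it is precisely what the lifting/$j$-function argument in the paper is designed to handle in a unified way.
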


By considering $\tau$ as the Frobenius 
automorphism of $\F_\fp$ relative to $\F_q$, that is, the map $\alpha\mapsto \alpha^q$, 
we can also consider $\F_\fp$ as an $A$-module via $\psi\otimes \F_\fp$ (hence $T$ acts on $\F_\fp$ as $(\psi\otimes \F_\fp)_T$). 
This module will be denoted  ${^\psi}\F_\fp$. 
The properties of the torsion elements of this module
lead to an $A$-module isomorphism
$$
{^\psi}\F_\fp\cong A/d_{1, \fp}A\times A/d_{2, \fp}A
$$
for uniquely determined nonzero monic polynomials $d_{1, \fp}, d_{2, \fp}\in A$ such that $d_{1, \fp}\mid d_{2, \fp}$; 
as above, the polynomials  $d_{1, \fp}, d_{2, \fp}$ also depend on $\psi$, although this will not be explicitly indicated in our notation.
Note that
 $d_{2, \fp}$ may be regarded as the exponent of the $A$-module ${^\psi}\F_\fp$,
 and that $|d_{1, \fp}\cdot d_{2,\fp}|=|p|$;
thus  we have the trivial lower bound $|d_{2,\fp}|\geq |p|^{\frac{1}{2}}$. Theorem \ref{thm1} allows us to deduce a stronger lower 
bound on $|d_{2,\fp}|$:

\begin{theorem}\label{thm2}
	Assume $\End_{F^\alg}(\psi)=A$. Then 
	$$
	|d_{2, \fp}|\gg_{\psi}   |p|^{\frac{1}{2}} \frac{ (\log|p|)^{\frac{1}{2}}  }{\log\log |p|},
	$$
	where the implied $\gg_{\psi}$-constant depends on $q$ and on the coefficients of the polynomial 
	$\psi_T \in F\{\tau\}$.
\end{theorem}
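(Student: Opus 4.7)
The plan is to reduce Theorem \ref{thm2} to Theorem \ref{thm1} by relating $d_{1,\fp}$ to the index $[\cE_{\psi,\fp}:A[\pi_\fp]]$ via a ``Tate module'' argument analogous to the one used by Duke--Toth and Kowalski for elliptic curves over $\Q$.

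\emph{Step 1: Divisibility of $\pi_\fp-1$ by $d_{1,\fp}$.} The module ${^\psi}\F_\fp$ of $\F_\fp$-rational points is exactly the kernel of $\pi_\fp-1\in \cE_{\psi,\fp}$ acting on $\F_\fp^{\alg}$ (viewed via $\psi$). The decomposition
\[
{^\psi}\F_\fp\cong A/d_{1,\fp}A\times A/d_{2,\fp}A
\]
with $d_{1,\fp}\mid d_{2,\fp}$ shows that the full $d_{1,\fp}$-torsion $\psi[d_{1,\fp}]$ is contained in $\ker(\pi_\fp-1)$, since $|\psi[d_{1,\fp}]|=|d_{1,\fp}|^2$ and ${^\psi}\F_\fp$ contains a subgroup isomorphic to $(A/d_{1,\fp}A)^2$. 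Consequently $\pi_\fp-1$ is divisible by $\psi_{d_{1,\fp}}$ as an isogeny, which gives an element
\[
\beta_\fp:=\frac{\pi_\fp-1}{d_{1,\fp}}\in \cE_{\psi,\fp}.
\]

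\emph{Step 2: Discriminant computation.} The characteristic polynomial of $\pi_\fp$ over $F$ is $X^2+a_\fp X+\mu_\fp p$, so the characteristic polynomial of $\beta_\fp$ over $F$ is
\[
X^2+\frac{a_\fp+2}{d_{1,\fp}}X+\frac{1+a_\fp+\mu_\fp p}{d_{1,\fp}^{2}}.
\]
Integrality of $\beta_\fp$ forces $d_{1,\fp}\mid a_\fp+2$ and $d_{1,\fp}^{2}\mid 1+a_\fp+\mu_\fp p$ (the latter is the order $|{^\psi}\F_\fp|=|p|$ considered as a polynomial). A direct expansion gives
\[
\disc(A[\beta_\fp])=\frac{(a_\fp+2)^{2}-4(1+a_\fp+\mu_\fp p)}{d_{1,\fp}^{2}}=\frac{a_\fp^{2}-4\mu_\fp p}{d_{1,\fp}^{2}}.
\]
Since $A[\beta_\fp]\subseteq \cE_{\psi,\fp}$, one has $\Delta_\fp\mid \disc(A[\beta_\fp])$ up to a unit square, so
\[
|\Delta_\fp|\cdot |d_{1,\fp}|^{2}\leq |a_\fp^{2}-4\mu_\fp p|.
\]

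\emph{Step 3: Combining with Weil bound and Theorem \ref{thm1}.} The Weil bound for rank $2$ Drinfeld modules gives $|a_\fp|\leq |p|^{1/2}$, and $|\mu_\fp|=1$, so $|a_\fp^{2}-4\mu_\fp p|\leq |p|$ (here we use that $q$ is odd, so $4\neq 0$). Hence
\[
|d_{1,\fp}|^{2}\leq \frac{|p|}{|\Delta_\fp|}.
\]
From $|d_{1,\fp}\cdot d_{2,\fp}|=|{^\psi}\F_\fp|=|p|$ we deduce
\[
|d_{2,\fp}|=\frac{|p|}{|d_{1,\fp}|}\geq |p|^{1/2}\cdot |\Delta_\fp|^{1/2}.
\]
Inserting the lower bound of Theorem \ref{thm1} finishes the proof.

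The main obstacle, if any, is the justification of Step 1: one must ensure that, even though $\fp$ is the residue characteristic, the part of ${^\psi}\F_\fp$ of order coprime to $p$ controls $d_{1,\fp}$ and that $\beta_\fp$ really lies in $\cE_{\psi,\fp}$ (not merely in $\cO_{F(\pi_\fp)}$). This is handled by considering the Tate module $T_\ell(\psi\otimes\F_\fp)$ for a prime $\ell\neq \fp$ of $A$ dividing $d_{1,\fp}$ and using the compatibility of $\cE_{\psi,\fp}$ with isogenies whose kernel is an $\F_\fp$-rational finite flat $A$-module scheme; everything else is formal once $\beta_\fp\in\cE_{\psi,\fp}$ is established.
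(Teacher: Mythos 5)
Your proof is correct and takes essentially the same route as the paper's: both hinge on writing $\pi_\fp = 1 + d_{1,\fp}\alpha$ with $\alpha \in \cE_{\psi,\fp}$ (coming from $\F_\fp$-rationality of the $d_{1,\fp}$-torsion), deriving the inequality $|d_{1,\fp}|^2\,|\Delta_\fp| \leq |p|$, and then combining $|d_{1,\fp}d_{2,\fp}|=|p|$ with Theorem \ref{thm1}. The only difference is cosmetic: the paper obtains the middle inequality from $|p|=|d_{1,\fp}|^2|\alpha\alpha'|$ together with $|\alpha\alpha'|=|a_1^2-a_2^2\Delta_\fp|\geq|\Delta_\fp|$ (using imaginary-ness of $F(\pi_\fp)/F$ to rule out cancellation of leading terms), whereas you compute $\disc(A[\beta_\fp])=(a_\fp^2-4\mu_\fp p)/d_{1,\fp}^2$ and invoke the standard nested-order divisibility $\Delta_\fp\mid\disc(A[\beta_\fp])$ --- an equivalent and equally short derivation.
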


Theorems \ref{thm1} and \ref{thm2} are Drinfeld module
analogues of Schoof's results for 
elliptic curves over $\Q$; see \cite{Sc91}. 
While our proof of Theorem \ref{thm1} is inspired by Schoof's argument,
it relies not only on Drinfeld's now classical function field analogue of the analytic theory of elliptic curves,
but also on the function field counterpart of the $j$-function, investigated by Gekeler  \cite{Ge97}, 
and on the Drinfeld module analogue of Deuring's lifting lemma, 
proved in an earlier paper by the present authors \cite{CoPa15}.

\begin{remark}\label{remark-complementary}
According to Theorem \ref{thm1}, $|\Delta_{\fp}|$ grows with $\deg p$. 
In relation to the growth of $|\Delta_{\fp}|$,
Theorem 1.1 of \cite{GaPa18} implies that, 
for any fixed number $\kappa > 0$, we can find $\fp$ such that $|c_{\fp}| > \kappa$;
therefore, for such $\fp$,
$|\Delta_{\fp}|=|c_\fp|^2\cdot |\Delta_{F(\pi_\fp)}| > \kappa$.
However,   \cite[Thm. 1.1]{GaPa18} does not imply that $|\Delta_{\fp}|$ has to grow with $\deg p$. 
In fact, computationally, Garai and the second author have found that it happens that $c_{\fp}=1$; in this case,
 \cite[Thm. 1.1]{GaPa18} does not give any lower bound on $|\Delta_{\fp}|$. 
On the other hand,
Theorem \ref{thm1} does not imply that 
 we can find any $\fp$ such that $|c_{\fp}|>\kappa$.  
Thus these two results are complementary to each other. 
\end{remark}

\begin{remark}\label{remark-CM-case}
	If $\End_{F^\alg}(\psi)\neq A$, then $\End_{F^\alg}(\psi)=\cO$ is an order in an 
	imaginary quadratic extension $K$ of $F$,
	in which case the growth of $|\Delta_{\fp}|$ is vastly different from that shown in Theorem \ref{thm1}.
	On one hand, if $\fp\in \cP(\psi)$  splits in $K$, then $\cO\subseteq \End_{\F_\fp}(\psi\otimes \F_\fp)\subseteq \cO_K$  (see \cite[Lem. 3.3]{Ge83}),
	which
	implies  that $|\Delta_{\fp}|\leq |\Delta_{\cO}|$, where $\Delta_{\cO}$ is the discriminant of $\cO$. 
	Hence $|\Delta_{\fp}|$ remains bounded as $\fp$ varies over the primes that split in $K$. In particular, Theorem \ref{thm1} is false without its assumption. 
	On the other hand, if $\fp \in \cP(\psi)$ is inert in $K$, then $\psi\otimes \F_\fp$ is supersingular,
	which implies that
	 $A[\pi_\fp]=A[\sqrt{\alpha p}]$ for some $\alpha \in \F_q^\times$, 
	 and that $A[\pi_\fp]=\cE_\fp=\cO_{F(\pi_\fp)}$
	 (see Lemma 5.2 and Theorem 5.3 of \cite{Ge83}).
	Hence $|\Delta_{\fp}|=|p|$, a much larger growth than that shown in  Theorem \ref{thm1}.
	One can also prove that, in this case,
	 $d_{1, \fp}=1$ and $d_{2, \fp}=p-\beta$ for some $\beta \in \F_q^\times$ 
	(see \cite[Cor. 3]{CoPa15}). Hence $|d_{2, \fp}|=|p|$, which  is as large as possible. 
\end{remark}

\begin{remark}\label{remark-upperbound} 
From the theory of Drinfeld modules over finite fields, one can deduce that the 
	discriminant of $A[\pi_\fp]$ has degree $\leq \deg p$. 
	More precisely, the characteristic polynomial of $\pi_{\fp}$ is of the form 
	$X^2+a_{\fp}X+\mu_{\fp}  p \in A[X]$,
where $\mu_{\fp} \in \F_q^\times$
and
$\deg a_{\fp} \leq \frac{\deg p}{2}$.
This implies that $|\Delta_{\fp}| \leq \left|a_{\fp} - 4 \mu_{\fp} p \right| \leq |p|$.  
Note that the coefficients $a_{\fp}$, $\mu_{\fp}$ also depend on $\psi$, although this will not be explicitly indicated in our notation.
	\end{remark}

The lower bound on $|\Delta_{\fp}|$  in Theorem \ref{thm1} holds for \textit{all} 
 primes $\fp\lhd A$, with finitely many exceptions. The next theorem gives a stronger lower bound,
 almost as close as the upper bound of Remark \ref{remark-upperbound}, 
 which holds  for {\it{a set}} of primes $\fp$ {\it{of Dirichlet  density 1}}:

\begin{theorem}\label{thm3}
Assume $\End_{F^\alg}(\psi)=A$. 
For any function $f: (0, \infty) \longrightarrow (0, \infty)$ with $\ds\lim_{x \rightarrow \infty} f(x) = \infty$,
we have that, as $x \rightarrow \infty$, 
$$
\#\left\{
\mathfrak{p} \in  \cP(\psi):
\deg p = x,
 |\Delta_{\fp}|
 >
\frac{\left|a_{\fp} - 4 \mu_{\fp} p \right| }{q^{f(\deg p)}} 
\right\}
\sim
\pi_F(x),
$$
where
$\pi_F(x) := \#\left\{\fp \lhd A \ : \deg p = x\right\}$.
Moreover, 
the Dirichlet density of the set
$$
\left\{
\fp \in  \cP(\psi):
 |\Delta_{\fp}|
 >
\frac{\left|a_{\fp} - 4 \mu_{\fp} p \right| }{q^{f(\deg p)}} 
\right\}
$$
exists and equals 1.
\end{theorem}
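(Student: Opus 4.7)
The proof begins by translating the inequality $|\Delta_{\fp}| > |a_{\fp}^2 - 4\mu_{\fp}p|/q^{f(\deg p)}$ into an equivalent statement about the index $\mathfrak{f}_{\fp} := [\cE_{\psi,\fp} : A[\pi_{\fp}]]$. Setting $b_{\fp} := [\cO_{F(\pi_{\fp})} : A[\pi_{\fp}]]$, transitivity of discriminants gives $|a_{\fp}^2 - 4\mu_{\fp}p| = |b_{\fp}|^2\,|\Delta_{F(\pi_{\fp})}|$ and $|\Delta_{\fp}| = |c_{\fp}|^2\,|\Delta_{F(\pi_{\fp})}|$, while $b_{\fp} = c_{\fp}\,\mathfrak{f}_{\fp}$; consequently the ratio equals $1/|\mathfrak{f}_{\fp}|^2$ and the desired inequality reads $\deg \mathfrak{f}_{\fp} < f(\deg p)/2$. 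The theorem thus reduces to showing that the exceptional set
\[
\cB_x := \{ \fp \in \cP(\psi) :\ \deg p = x,\ \deg \mathfrak{f}_{\fp} \geq f(x)/2 \}
\]
satisfies $\#\cB_x = o(\pi_F(x))$ and has Dirichlet density zero.

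The key analytic input is a Chebotarev-type estimate for the events $\{n \mid \mathfrak{f}_{\fp}\}$ as $n$ ranges over monic polynomials in $A$. By the Galois-theoretic characterization of $\mathfrak{f}_{\fp}$ from \cite{CoPa15}, the divisibility $n \mid \mathfrak{f}_{\fp}$ is equivalent to the Frobenius at $\fp$ being a scalar in the mod-$n$ Galois representation $\rho_{\psi,n} : \Gal(F^{\sep}/F) \to \GL_2(A/n)$ attached to $\psi$. The open image theorem for rank 2 non-CM Drinfeld modules then provides a fixed nonzero $\fc \in A$ such that $\rho_{\psi,n}$ is surjective for every $n$ coprime to $\fc$; for such $n$ the scalar class has relative size $\phi(n)/|\GL_2(A/n)| = O(q^{-3\deg n})$. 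Applying the effective Chebotarev density theorem over function fields to the cover $F(\psi[n])/F$, whose genus is bounded by $|\GL_2(A/n)|\cdot O(\deg n)$ via Riemann--Hurwitz and the known tame ramification of Drinfeld division fields, yields
\[
\#\{\fp : \deg p = x,\ n \mid \mathfrak{f}_{\fp}\} = \frac{\phi(n)}{|\GL_2(A/n)|}\,\pi_F(x) + O\bigl(q^{4\deg n}(\deg n)\,q^{x/2}/x\bigr).
\]

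To finish, observe that the conclusion is monotone in $f$ (larger $f$ makes the inequality easier to satisfy), so upon replacing $f$ by $\min(f,\,(\log_q x)/100)$ we may assume $f(x) \to \infty$ as slowly as we wish and in particular $f(x) \leq (\log_q x)/100$. Splitting $\mathfrak{f}_{\fp} = \mathfrak{f}_{\fp}^{(\fc)}\cdot \mathfrak{f}_{\fp}^{(\fc')}$ into its $\fc$-part and $\fc$-coprime part, the event $\fp \in \cB_x$ forces either $\deg \mathfrak{f}_{\fp}^{(\fc)} \geq f(x)/4$ or $\deg \mathfrak{f}_{\fp}^{(\fc')} \geq f(x)/4$. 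The first option involves only the finitely many primes dividing $\fc$ and contributes an exponentially small $O(q^{-c f(x)})\,\pi_F(x)$ via Chebotarev inside the image of $\rho_{\psi,\fc^\infty}$. The second is controlled by a union bound over monic $n$ coprime to $\fc$ with $\deg n \geq f(x)/4$: the main-term contribution sums to $O(q^{-f(x)/2})\,\pi_F(x) = o(\pi_F(x))$, while the accumulated Chebotarev error is dominated by $q^{5f(x)}\,q^{x/2}$, which thanks to the slowness of $f$ is $o(q^x/x)$. Combining, $\#\cB_x = o(\pi_F(x))$, and the Dirichlet density assertion follows from the same estimate applied to the Dirichlet-series version of Chebotarev.

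The principal obstacle, paralleling the situation in Schoof's treatment of elliptic curves \cite{Sc91}, is making the effective Chebotarev error uniform across the family $\{F(\psi[n])\}_n$: one must control the genus of $F(\psi[n])$, as well as the inertia at primes above $n$ and $\infty$, tightly enough that the union-bound error over $n$ of moderate degree does not swamp the main term. In our setting this is made tractable by the open image theorem together with the well-understood ramification structure of Drinfeld division fields.
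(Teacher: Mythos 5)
Your reduction to the size of $[\cE_{\psi,\fp}:A[\pi_\fp]]$ (your $\mathfrak{f}_\fp$, the paper's $b_\fp$), the Galois-theoretic characterization of $m\mid\mathfrak f_\fp$ via Frobenius acting as a scalar in $\GL_2(A/mA)$, and the use of $[J_m:F]\asymp|m|^3$ from the open-image result are all correct and match the paper's ingredients. The monotonicity observation that one may assume $f(x)\leq(\log_q x)/100$ is also valid. However, there is a genuine gap at the central analytic step.

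The gap is in the union bound over monic $m$ with $\deg m\geq f(x)/4$. A prime $\fp\in\cB_x$ can have $\mathfrak f_\fp$ irreducible of degree as large as $x/2$ (since $\deg\mathfrak f_\fp\leq\deg b_\fp\leq x/2$), in which case the only nontrivial divisor of $\mathfrak f_\fp$ is $\mathfrak f_\fp$ itself. So the union bound genuinely requires $m$ ranging over $f(x)/4\leq\deg m\leq x/2$, and the accumulated Chebotarev error cannot be ``$q^{5f(x)}q^{x/2}$'': summing your per-$m$ error $O(q^{4\deg m}(\deg m)q^{x/2}/x)$ over $q^d$ monic $m$ of each degree $d\leq x/2$ is dominated by the top $d=x/2$ and gives roughly $q^{3x}$, and even with the sharper error $O_\psi(q^{x/2}\deg m)$ (as in the effective Chebotarev bound the paper actually uses, from \cite[Thm.~15]{CoPa15}) the total is on the order of $q^{x/2}\cdot q^{x/2}\cdot x=q^{x}x$, which is not $\o(q^x/x)$. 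Your stated error would be correct only if the sum over $m$ were truncated at $\deg m\approx f(x)$, but nothing in the argument justifies such a truncation. The paper resolves exactly this difficulty by splitting the range at a parameter $y=\frac{(11+\varepsilon)x}{24}$: for $\deg m\leq y$ it applies Chebotarev (getting $O(q^{x-f(x)}+q^{x/2+y})$), and for $y<\deg m\leq x/2$ it abandons Chebotarev entirely and instead bounds $\#\{\fp:\deg p=x,\ m^2\mid(a_\fp^2-4\mu_\fp p)\}$ via the Square Sieve as in \cite[(41)]{CoPa15}, producing $O_{\psi,\varepsilon}(q^{15x/8-2y+\varepsilon x}x^3)$. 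The two estimates balance at $q^{23x/24+o(x)}$. Your proof is missing any method for the large-$\deg m$ regime, and this is where the real work lies.

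A secondary point: the Dirichlet-density assertion is dismissed in one sentence, whereas the paper has to carry the same two-range decomposition through the Dirichlet series, isolate the constant-field-extension degree $c_m(x)$, and analyze several auxiliary sums $T_{1,1},T_{1,2},T_2$ as $s\to1^+$; this is a nontrivial bookkeeping step that cannot be waved away. Also, your separate treatment of the $\fc$-part of $\mathfrak f_\fp$ is unnecessary, since the lower bound $[J_m:F]\gg_\psi|m|^3$ from \cite{CJ20} already holds for all $m$, not just $m$ prime to the exceptional conductor.
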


Theorem \ref{thm3} is a Drinfeld module (and unconditional)  analogue
of a recent result of the first author and Fitzpatrick for 
elliptic curves over $\Q$; see \cite{CoFi19}.
It is inspired by the results of \cite{CoSh15}
and
relies on the main ideas of \cite{CoPa15}.

In Sections \ref{sOrders}, \ref{sQF}, \ref{sjInv} 
we review and prove
several results
about orders, quadratic forms, and the $j$-invariants of Drinfeld modules,
as needed in the proofs of the main theorems,
which we give in Sections 5, 6.

\medskip

{\bf{Notation}}. 
Throughout the paper,  we use the standard $\sim$, $\o$, $\O$,  $\ll$, $\gg$ notation, which we now recall:
given suitably defined real functions $h_1, h_2$,
we say that
$h_1 \sim h_2$ if $\ds\lim_{x \rightarrow \infty} h_1(x)/h_2(x) = 1$;
we say that
$h_1 = \o(h_2)$ if $\ds\lim_{x \rightarrow \infty} h_1(x)/h_2(x) = 0$;
we say that
$h_1 = \O(h_2)$ or $h_1 \ll h_2$ or $h_2 \gg h_1$
if 
$h_2$ is positive valued 
and
 there exists a positive constant $C$ such that 
$|h_1(x)| \leq C h_2(x)$ for all $x$ in the domain of $h_1$;
we say that
$h_1 = \O_D(h_2)$ or $h_1 \ll_D h_2$ or $h_2 \gg_D h_1$
if 
$h_1 \ll h_2$
and
the implied $\O$-constant $C$ depends on priorly given data  $D$.
We make the convention that any implied $\O$-constant may depend on $q$ without any explicit specification.

%*************************************
%*************************************
%*************************************
\section{$A$-orders}\label{sOrders}
%*************************************
%*************************************
%*************************************

Let $K/F$ be a quadratic imaginary extension. 
Let $B$ be the integral closure of $A$ in $K$. An \textit{$A$-order} in $K$ is an $A$-subalgebra $\cO$ 
of $B$ with the same unity element and such that $B/\cO$ has finite cardinality. 
Note that an $A$-order $\cO$ 
is a free $A$-module of rank $2$
and  that
there is an $A$-module isomorphism $B/\cO\cong A/cA$ for a unique nonzero monic polynomial $c\in A$, called 
the \textit{conductor} of $\cO$ in $B$. It is easy to show that $\cO=A+cB$. 

Let $\{\alpha_1, \alpha_2\}$ be a basis 
of $\cO$ as a free $A$-module, and $\sigma\in \Gal(K/F)\cong \Z/2\Z$ be the generator of $\Gal(K/F)$. 
The discriminant of $\{\alpha_1, \alpha_2\}$ 
is $$\disc(\alpha_1, \alpha_2)=\det\begin{pmatrix} \alpha_1 & \sigma(\alpha_1)\\ \alpha_2 & \sigma(\alpha_2)\end{pmatrix}^2.$$
If $\{\beta_1, \beta_2\}$ is another $A$-basis of $\cO$, then $\disc(\beta_1, \beta_2)=\kappa^2\cdot \disc(\alpha_1, \alpha_2)$ for some 
$\kappa\in \F_q^\times$. The \textit{discriminant} $\Delta_{\cO}$ of $\cO$ is defined to be
$\disc(\alpha_1, \alpha_2)$, up to an $(\F_q^\times)^2$-multiple. 
It is elementary to show that $\cO=A[\sqrt{\Delta_{\cO}}]$ and $(\Delta_{\cO})=c^2\cdot (\Delta_B)$, where 
$(a)$ denotes the ideal generated by $a\in A$. Note that $\Delta_B$ is square-free. 

\begin{remark}\label{rem-inf}
	The splitting behavior of $\infty=1/T$ in any quadratic extension $K/F$ can be described using the discriminant $\Delta_B$. Namely, 
	$\infty$ ramifies in $K/F$ $\Longleftrightarrow$ $\deg \Delta_B$ is odd;  
	$\infty$ splits in $K/F$ $\Longleftrightarrow$ $\deg \Delta_B$ is even and the leading coefficient of $\Delta_B$ is 
	a square in $\F_q^\times$; 
	$\infty$ is inert in $K/F$ $\Longleftrightarrow$ $\deg \Delta_B$ is even and 
	the leading coefficient of $\Delta_B$ is not a square in $\F_q^\times$.
\end{remark}

There are two important groups associated to an $A$-order ${\cal{O}}$:
the {\it{unit group}} $\cal{O}^{\times}$ 
of invertible elements of $\cal{O}$
and
the {\it{ideal class group}} $\Cl(\cO)$
of  classes of proper (invertible) fractional ideals of ${\cal{O}}$; cf. \cite[p. 136]{Co89}.  
The ideal class group is finite and its cardinality
$h(\cal{O})$ is called the {\it{class number}} of $\cal{O}$. 
The class numbers of $\cO$ and $B$ are related by the following well-known formula 
(cf. \cite[Thm. 7.24]{Co89} or \cite[pp. 323--324]{Yu95bis}):
\begin{equation}\label{h-O-h-O-K}
h(\cO)=h\left(B\right)\frac{|c|}{\left[B^\times : \cO^\times\right]}
\ds\prod_{\substack{\ell \mid c\\ \ell \text{ monic irreducible}}
}
\left(1 - \left(\frac{K}{\ell}\right)\frac{1}{ \left|\ell\right| }\right),
\end{equation}
where 
$$
\left(\frac{ K}{\ell}\right) :=
\begin{cases}
1 & \text{if $(\ell)$ splits in $K$}, \\ 
-1 & \text{if $(\ell)$ is inert in $K$}, \\ 
0 & \text{if $(\ell)$ ramifies in $K$}. 
\end{cases}
$$
Hence 
\begin{equation}\label{eqh-O-B1}
h(\cO)\leq h\left(B\right)\cdot |c| \prod_{\substack{\ell \mid c\\ \ell \text{ monic irreducible}}}
\left(1 +\frac{1}{ \left|\ell\right| }\right). 
\end{equation}

The product on the right-hand side of (\ref{eqh-O-B1}) may be bounded from above as follows: 
\begin{lemma}\label{lemh-O-B2}
	\begin{equation}
	\prod_{\substack{\ell \mid c\\ \ell \mathrm{\ monic\ irreducible}}}
	\left(1 +\frac{1}{ \left|\ell\right| }\right) \ll  \log\log|c|.
	\end{equation}
\end{lemma}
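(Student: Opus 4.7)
The plan is to linearize the product using $1+x \leq e^x$ and then bound the resulting sum of reciprocals via the prime number theorem for $A = \F_q[T]$ and its Mertens-type consequence.

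First, I would write
$$
\prod_{\substack{\ell \mid c\\ \ell \text{ monic irreducible}}} \left(1 + \frac{1}{|\ell|}\right) \leq \exp\left(\sum_{\substack{\ell \mid c\\ \ell \text{ monic irreducible}}} \frac{1}{|\ell|}\right),
$$
so that it suffices to show
$$
\sum_{\substack{\ell \mid c\\ \ell \text{ monic irreducible}}} \frac{1}{|\ell|} \leq \log\log\log |c| + O(1),
$$
since then the right-hand side is $\ll \log\log |c|$ (using $\log|c| = (\deg c) \log q$).

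To bound this sum, I would split it at a parameter $N$ to be chosen, and treat small-degree and large-degree prime divisors of $c$ separately. For the small-degree piece, the prime counting formula $\pi_A(d) := \#\{\ell \text{ monic irreducible in } A,\ \deg \ell = d\} = q^d/d + O(q^{d/2}/d)$ gives the function field analogue of Mertens' theorem:
$$
\sum_{\substack{\ell \text{ monic irreducible} \\ \deg \ell \leq N}} \frac{1}{|\ell|} = \sum_{d=1}^{N} \frac{\pi_A(d)}{q^d} = \sum_{d=1}^{N} \frac{1}{d} + O(1) = \log N + O(1).
$$
For the large-degree piece, I would exploit the crude inequality $\sum_{\ell \mid c} \deg \ell \leq \deg c$, which forces at most $\deg c / N$ prime divisors of $c$ to have degree exceeding $N$; each such term contributes at most $q^{-N}$, so
$$
\sum_{\substack{\ell \mid c \\ \deg \ell > N}} \frac{1}{|\ell|} \leq \frac{\deg c}{N \, q^N}.
$$

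The choice $N = \lfloor \log_q \deg c \rfloor$ makes the tail contribution $O(1/\log \deg c)$, while the main term becomes $\log\log_q \deg c + O(1) = \log\log\log|c| + O(1)$, which yields the claimed bound. The only substantive input is the prime number theorem for $\F_q[T]$ (classical, via M\"obius inversion of $q^n = \sum_{d\mid n} d\, \pi_A(d)$), so I do not anticipate any real obstacle; the mild subtlety lies only in balancing the two error terms via the choice of $N$.
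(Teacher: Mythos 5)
Your proof is correct, and it takes a genuinely different route from the paper's. The paper first sharpens each factor via $1+\frac{1}{|\ell|}<\frac{|\ell|}{|\ell|-1}$, rewrites the product as $|c|/\varphi_A(c)$, and then simply cites the function-field analogue of the classical lower bound for Euler's totient, namely $|c|/\varphi_A(c)\ll\log\log|c|$ from Breuer \cite[Lem.\ 2.2]{Br}. Your argument instead gives a direct, self-contained proof of the same estimate: you linearize with $1+x\leq e^x$, split the resulting sum of reciprocals at a threshold $N=\lfloor\log_q\deg c\rfloor$, control the low-degree part by the function-field Mertens asymptotic $\sum_{\deg\ell\leq N}|\ell|^{-1}=\log N+\O(1)$ coming from $\pi_A(d)=q^d/d+\O(q^{d/2}/d)$, and control the tail by the trivial observation that $\sum_{\ell\mid c}\deg\ell\leq\deg c$ forces at most $\deg c/N$ prime divisors of degree $>N$. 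Both routes are ultimately the same Mertens-type computation; the paper outsources it to a reference, while you carry it out from scratch, which makes your proof longer but more transparent and independent of the literature. Your choice of $N$ is the correct one, the tail estimate $(\deg c)/(Nq^N)=\O(1/\log\deg c)$ is right, and the final bookkeeping (the sum is $\log\log\deg c+\O(1)$, so the product is $\ll\log\deg c\asymp\log\log|c|$) goes through.
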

\begin{proof} To simplify the notation, in the proof we assume that $\ell$ is always monic and irreducible. 
	Note that 
	$$
	\prod_{\ell\mid c}
	\left(1 +\frac{1}{ \left|\ell\right| }\right) < \prod_{\ell\mid c} \frac{|\ell|}{|\ell|-1}=\frac{|c|}{\varphi_A(c)},
	$$
	where $\varphi_A(c)=|c|\prod_{\ell\mid c}(1-1/|\ell|)$ is the analogue of Euler's function,
	and recall that  the well-known bound 
	$$|c|/\varphi_A(c) \ll \log\log|c|$$
	for the classical Euler function is valid also for its function field analogue; see \cite[Lem. 2.2]{Br}. 
	Then Lemma \ref{lemh-O-B2} follows.
	\end{proof}

Next, the class number $h(B)$ of the  maximal order may be estimated from above as follows:

\begin{lemma}\label{lemh-O-B3-bis}
$$
	h(B)
	\leq
\begin{cases}
	\frac{\sqrt{|\Delta_B|} \deg \Delta_B}{\sqrt{q}},
	& \text{if $\deg \Delta_B$ is odd,}
	\\
	\\
	\frac{2 \sqrt{|\Delta_B|} \deg \Delta_B}{q + 1},
	&  \text{otherwise.}
	\end{cases}
	$$	
	\end{lemma}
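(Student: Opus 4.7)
The plan is to combine a Minkowski-style argument based on Riemann--Roch with an elementary divisor-function estimate over $A$. Let $X$ be the smooth projective model of $K$ over $\F_q$, $\infty_K$ the unique place of $K$ above the infinity place of $F$ (of degree $d_\infty \in \{1,2\}$), and $g_K$ the genus of $X$. By Riemann--Hurwitz applied to the degree two cover $X \to \mathbb{P}^1_{\F_q}$, one has $g_K = (\deg \Delta_B - 1)/2$ when $\deg \Delta_B$ is odd ($\infty$ ramifies, $d_\infty = 1$) and $g_K = \deg \Delta_B/2 - 1$ when $\deg \Delta_B$ is even ($\infty$ inert, $d_\infty = 2$), in accordance with Remark~\ref{rem-inf}. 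Riemann--Roch, applied to a suitable translate of any class in $\Cl(B) \cong \Pic(X)/\langle [\infty_K]\rangle$ by a multiple of $[\infty_K]$, produces an integral representative $\mathfrak{a} \subseteq B$ of degree at most $g_K + d_\infty - 1$ in each class.

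Next I would bound the number of integral ideals of $B$ of bounded degree by pushing forward along the degree-preserving norm $N_{K/F}\colon\{\mathfrak{a}\subseteq B\}\to\{\text{ideals of }A\}$. A local analysis based on the splitting behaviour of each prime of $A$ in $B$ shows that, for any monic $a \in A$, the number of integral ideals $\mathfrak{a}$ with $N_{K/F}(\mathfrak{a}) = (a)$ is at most the divisor function $\tau(a)$. Combined with the standard identity $\sum_{a\ \mathrm{monic},\,\deg a = m}\tau(a) = (m+1)q^m$ over $A$, this yields
\[
h(B)\;\leq\;\sum_{m=0}^{g_K + d_\infty - 1}(m+1)\,q^m.
\]

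An elementary manipulation of the right-hand side (using $q$ odd, so $q\geq 3$) bounds this geometric sum by $(2g_K+1)q^{g_K} = \sqrt{|\Delta_B|}\,\deg\Delta_B/\sqrt{q}$, recovering the first stated bound in the ramified case. To obtain the sharper constant $2/(q+1)$ in the inert case, however, the crude sum above is not enough, and I would refine the estimate by exploiting the explicit factorization
\[
\zeta_B(u)\;=\;\frac{L_K(u)(1+u)}{1-qu},
\]
in which the factor $(1+u)$ records the removal of the Euler factor at the place $\infty_K$ of norm $q^2$. Evaluating at $u = 1/q$ (or equivalently comparing residues via the analytic class number formula for the Dedekind domain $B$) yields the extra factor $(q+1)/q$ that is responsible for the denominator $q+1$ in the claimed inequality.

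The main obstacle is the sharpness of the constants in the two cases. The ramified case is essentially a direct geometric-sum computation once the Minkowski-type bound from Step~1 is in hand, whereas the inert case requires the finer analysis of $\zeta_B$ near the pole $u = 1/q$, or equivalently an improved bookkeeping of the local contributions at $\infty$, in order to extract the factor $q+1$ rather than the weaker $q-1$ produced by the naive geometric sum.
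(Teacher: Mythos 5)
Your proposal for the ramified case is a valid and genuinely different route, but the inert case is incomplete: it reverts to the paper's method without supplying the one estimate that makes it work, namely a bound on the special $L$-value.

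The paper treats both parities of $\deg\Delta_B$ uniformly. It starts from Rosen's class number formula (Thm.\ 17.8A), which gives $L(1,\chi_{\Delta_B}) = \frac{\sqrt{q}}{\sqrt{|\Delta_B|}}h(B)$ when $\deg\Delta_B$ is odd and $L(1,\chi_{\Delta_B}) = \frac{q+1}{2\sqrt{|\Delta_B|}}h(B)$ otherwise, and then bounds $|L(1,\chi_{\Delta_B})| \leq \deg\Delta_B$ by writing $L(1,\chi_{\Delta_B}) = \sum_{m\ \text{monic},\ \deg m < \deg\Delta_B}\chi_{\Delta_B}(m)/|m|$ (Rosen's Lemma~17.10) and applying the triangle inequality term by term. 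That last elementary bound is the entire proof.

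Your Riemann--Roch/Minkowski argument for the ramified case checks out: each class in $\Cl(B)\cong\Pic(X)/\langle[\infty_K]\rangle$ admits an integral representative of degree $\leq g_K + d_\infty - 1$, the norm pushforward bounds the ideal count by $\sum_{m=0}^{g_K+d_\infty-1}(m+1)q^m$, and for $d_\infty=1$, $g_K=(\deg\Delta_B-1)/2$, that sum is $\leq(2g_K+1)q^{g_K}$ (for any $q\geq 2$, so the appeal to $q$ odd is unnecessary), which equals the stated bound; in fact your method gives roughly a factor of $2$ better here asymptotically. But for the inert case the same sum is $\sum_{m=0}^{\deg\Delta_B/2}(m+1)q^m$, whose leading term is $(\deg\Delta_B/2+1)\sqrt{|\Delta_B|}$ --- larger than the target $\frac{2\deg\Delta_B}{q+1}\sqrt{|\Delta_B|}$ by a factor of order $q$. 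You correctly identify that one must then turn to $\zeta_B(u)=\frac{L_K(u)(1+u)}{1-qu}$, but the residue comparison at $u=1/q$ only expresses $h(B)$ as $\frac{2\sqrt{|\Delta_B|}}{q+1}\,L_K(1/q)$; you still need to bound $|L_K(1/q)|$, and the proposal does not say how. That bound, $|L_K(1/q)|\leq\deg\Delta_B$, is exactly what the paper extracts from Rosen's Lemma~17.10, and it is the crux of the inert case. Without it the second inequality is not proved.
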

\begin{proof} 
	The quadratic symbol $\left(\frac{ K}{\ell}\right)$
	gives rise to a quadratic character $\chi_{\Delta_B}(\cdot)$ and 
	to an associated $L$-function $L(s, \chi_{\Delta_B})$; see \cite[p. 316]{Ro02}. 
	On one hand, by the function field analogue of the classical class number formula (see \cite[Thm. 17.8A]{Ro02}), 
	$$
	L(1, \chi_{\Delta_B})
	=
	\begin{cases}
	\frac{\sqrt{q}}{\sqrt{|\Delta_B|}} h(B), & \text{if $\deg \Delta_B$ is odd,}
	\\
	\frac{q+1}{2 \sqrt{|\Delta_B|}} h(B), & \text{otherwise.}
		\end{cases}
	$$
	(Here we use our assumption that $K/F$ is imaginary.) On the other hand, by \cite[Lem. 17.10]{Ro02}, 
	$$
	L(1, \chi_{\Delta_B})= \sum_{\substack{0\neq m\in A\\ m \text{ monic}\\ \deg m<\deg\Delta_B}} \frac{\chi_{\Delta_B}(m)}{|m|}. 
	$$
	Hence 
	$$
	\left| 	L(1, \chi_{\Delta_B}) \right|\leq  \sum_{\substack{0\neq m\in A\\ m \text{ monic}\\ \deg m<\deg\Delta_B}} \frac{1}{|m|} 
	=\sum_{d=0}^{\deg(\Delta_B)-1} q^{-d}\sum_{\substack{0\neq m\in A\\ m \text{ monic}\\ \deg m=d}}1 = \deg \Delta_B, 
	$$
	where $\left| 	L(1, \chi_{\Delta_B}) \right|$ denotes the usual absolute value on $\C$. 
	Combining this bound with the class number formula, we obtain the stated bound for $h(B)$.
	\end{proof}

Putting together \eqref{eqh-O-B1}, Lemma \ref{lemh-O-B2} and Lemma \ref{lemh-O-B3-bis}, we obtain 
an upper bound for the class number $h(\cO)$ of the arbitrary order $\cO$:

\begin{equation}\label{bound-order-class-number}
h(\cO)\ll \sqrt{|\Delta_{\cO}|}\cdot (\deg \Delta_{\cO})^2. 
\end{equation}

%*************************************
%*************************************
%*************************************
\section{Quadratic Forms}\label{sQF}
%*************************************
%*************************************
%*************************************

Let $f(x, y)=ax^2+bxy+cy^2\in A[x, y]$ be a quadratic form. The \textit{discriminant} of $f(x, y)$ is $b^2-4ac$. 
The quadratic form $f(x, y)$ is \textit{primitive} if $\gcd(a, b, c)=1$. The group $\GL_2(A)$ acts 
on the set of primitive quadratic forms as usual: if $\begin{pmatrix} a & b/2 \\ b/2 & c\end{pmatrix}$ 
is the matrix of $f(x, y)=\begin{pmatrix} x & y\end{pmatrix} \begin{pmatrix} a & b/2 \\ b/2 & c\end{pmatrix} \begin{pmatrix} x \\ y\end{pmatrix}$ 
and ${\mathcal{M}} \in \GL_2(A)$, 
then ${\mathcal{M}}^t \begin{pmatrix} a & b/2 \\ b/2 & c\end{pmatrix}  {\mathcal{M}}$ is the matrix of 
${\mathcal{M}} f$. 
Two primitive quadratic forms $f$ and $g$ are \textit{properly equivalent} if 
$g= {\mathcal{M}} f$ for some 
${\mathcal{M} }\in \SL_2(A)$. 
In the proof of Theorem \ref{thm1} we will need the following analogue of a well-known classical result: 

\begin{theorem}\label{thm-ClO-QF}
Let $\cO$ be an imaginary quadratic $A$-order, of discriminant $\Delta_{\cO}$.
	If $ax^2+bxy+cy^2\in A[x, y]$ is a primitive quadratic form of discriminant $\Delta_\cO$, then 
	$A+\frac{-b+\sqrt{\Delta_\cO}}{2a}A$ is a proper fractional ideal of $\cO$. The map 
	$$
	ax^2+bxy+cy^2 \longmapsto A+\frac{-b+\sqrt{\Delta_\cO}}{2a}A
	$$
	induces a bijection between proper equivalence classes of  primitive quadratic forms of discriminant $\Delta_\cO$ 
	and $\Cl(\cO)$. 
\end{theorem}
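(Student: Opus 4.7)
The plan is to mimic the classical proof of this bijection \cite[Thm.~7.7]{Co89}, adapting it to the function field setting. Since $q$ is odd, $2 \in \F_q^\times \subseteq A$ is a unit, so every classical formula involving $1/2$ transcribes verbatim. We fix once and for all one of the two $F_\infty$-embeddings of $K$ into the completion $K_{\infty_K}$ at the unique place $\infty_K$ above $\infty$; this choice pins down a distinguished $\sqrt{\Delta_\cO} \in K$ and plays the role that $\mathrm{Im} > 0$ plays over $\C$ in the classical setup.

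For the forward direction, given a primitive form $f(x,y) = ax^2 + bxy + cy^2$ of discriminant $\Delta_\cO$, set $\tau_f := (-b + \sqrt{\Delta_\cO})/(2a)$. Then $a\tau_f^2 + b\tau_f + c = 0$, and $a\tau_f = (-b + \sqrt{\Delta_\cO})/2$ has minimal polynomial $X^2 + bX + ac$ over $F$; hence $A + A(a\tau_f)$ is an $A$-order in $K$ of discriminant $b^2 - 4ac = \Delta_\cO$ and therefore equals $\cO$. From this one deduces that $\mathfrak{a}_f := A + A\tau_f$ is a rank $2$ $A$-submodule of $K$ whose ring of multipliers is exactly $\cO$ --- primitivity $\gcd(a,b,c) = 1$ is used here to preclude a larger multiplier ring --- so $\mathfrak{a}_f$ is a proper fractional $\cO$-ideal. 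A direct M\"obius-type calculation then shows that if $f' = M \cdot f$ with $M = \left(\begin{smallmatrix}\alpha & \beta \\ \gamma & \delta\end{smallmatrix}\right) \in \SL_2(A)$, then $\tau_{f'} = (\delta\tau_f - \beta)/(-\gamma\tau_f + \alpha)$, whence $\mathfrak{a}_{f'} = (-\gamma\tau_f + \alpha)^{-1}\mathfrak{a}_f$ and $[\mathfrak{a}_{f'}] = [\mathfrak{a}_f]$ in $\Cl(\cO)$.

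For the inverse direction, given a proper fractional $\cO$-ideal $\mathfrak{a}$ with a positively oriented $A$-basis $\{\alpha_1, \alpha_2\}$, set $f_\mathfrak{a}(x,y) := N_{K/F}(\alpha_1 x + \alpha_2 y)/N(\mathfrak{a})$, where $N(\mathfrak{a})$ is the monic generator of $N_{K/F}(\mathfrak{a}) \subseteq A$. The identity $\mathfrak{a} \cdot \sigma(\mathfrak{a}) = N(\mathfrak{a})\cO$, valid precisely because $\mathfrak{a}$ is proper, implies that $f_\mathfrak{a} \in A[x,y]$ is primitive and has discriminant $\Delta_\cO$; any two positively oriented bases of the same ideal differ by $\SL_2(A)$, and scaling $\mathfrak{a}$ by $\lambda \in K^\times$ rescales numerator and denominator by $N_{K/F}(\lambda)$, so the proper equivalence class of $f_\mathfrak{a}$ depends only on $[\mathfrak{a}]$. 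The two assignments are then checked to be mutually inverse by a direct computation.

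The main obstacle is pinning down the orientation convention: without it one obtains only a bijection with $\GL_2(A)$-equivalence classes, and refining this to $\SL_2(A)$-equivalence amounts to distinguishing the two cosets in $\GL_2(A)/\SL_2(A) \cong \F_q^\times/(\F_q^\times)^2$, which has order $2$ since $q$ is odd. I expect the cleanest device is the chosen embedding $K \hookrightarrow K_{\infty_K}$, which fixes the sign of $\sqrt{\Delta_\cO}$ and thus lets us call a basis positively oriented when the determinant $\alpha_1 \sigma(\alpha_2) - \sigma(\alpha_1)\alpha_2 \in \sqrt{\Delta_\cO}\cdot F^\times$ lands in a designated $(\F_q^\times)^2$-coset. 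Once this convention is in place, the rest of the proof is a routine transcription of the classical argument.
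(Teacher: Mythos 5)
Your proposal follows the same route as the paper, which simply cites Cox's Theorem 7.7 and refers to Beyerl's thesis \cite{Be09} for the adaptation to $A=\F_q[T]$; you are sketching those very details along Cox's lines (forward map via $\tau_f$, inverse via the norm form, properness from primitivity), which is exactly what the cited source does. One small correction to the final paragraph: $\GL_2(A)/\SL_2(A)\cong\F_q^\times$ via the determinant, not $\F_q^\times/(\F_q^\times)^2$; the relevant order-$2$ quotient is $\{M\in\GL_2(A):\det M=\pm 1\}/\SL_2(A)$, since only matrices with $\det M=\pm1$ preserve the discriminant $\Delta_\cO$ on the nose (a general $M\in\GL_2(A)$ scales it by $(\det M)^2$), and it is this $\pm1$ ambiguity that your orientation convention resolves. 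This does not affect the rest of the argument.
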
 
\begin{proof}
	The proof of Theorem 7.7 in \cite{Co89} works also in this context; see \cite{Be09}  for the details. 
\end{proof}

\begin{lemma}\label{lem-reduced}
	Every primitive quadratic form over $A$ is properly equivalent to a quadratic form $ax^2+bxy+cy^2$ 
	such that 
	\begin{equation}\label{reduced}
	\deg b < \deg a \leq \deg c.
	\end{equation}
\end{lemma}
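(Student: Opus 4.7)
The plan is to mimic the classical reduction theory of integral binary quadratic forms, replacing absolute value by the degree function on $A=\F_q[T]$ and exploiting two facts particular to our setting: $A$ is Euclidean with respect to $\deg$, and $q$ is odd, so $2 \in \F_q^\times$.

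First I would show that, up to a preliminary $\SL_2(A)$-substitution, we may assume the leading coefficient is nonzero. If $f=ax^2+bxy+cy^2$ has $a=0$, then either $c\neq 0$ and the matrix $\begin{pmatrix} 0 & -1 \\ 1 & 0\end{pmatrix}\in\SL_2(A)$ (which replaces $(a,b,c)$ by $(c,-b,a)$) yields a form with nonzero first coefficient, or $a=c=0$ and $b\neq 0$, in which case one application of $\begin{pmatrix} 1 & 1 \\ 0 & 1\end{pmatrix}$ followed by the swap works. Hence we may freely restrict attention to primitive forms with $a\neq 0$, and the set of degrees $\{\deg a' : a'x^2+b'xy+c'y^2 \text{ properly equivalent to } f,\ a'\neq 0\}$ is a nonempty subset of $\Z_{\geq 0}$, so it has a least element.

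Next, I would fix a representative $g=ax^2+bxy+cy^2$ in the proper equivalence class of $f$ with $\deg a$ minimal, and reduce $b$ modulo $2a$. Applying $\begin{pmatrix} 1 & m \\ 0 & 1\end{pmatrix}\in\SL_2(A)$ sends $g$ to $ax^2+(b+2am)xy+(am^2+bm+c)y^2$. Because $2\in \F_q^\times$, $\deg(2a)=\deg a$, so the Euclidean algorithm in $A$ produces $m\in A$ with $\deg(b+2am)<\deg a$. Replacing $g$ by the resulting properly equivalent form, we may assume $\deg b<\deg a$; note that this step did not change $a$, so $\deg a$ remains minimal in the class.

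Finally, I would verify $\deg a\leq\deg c$ by the minimality principle. Applying $\begin{pmatrix} 0 & -1 \\ 1 & 0\end{pmatrix}$ to the current form sends $(a,b,c)\mapsto(c,-b,a)$, again a properly equivalent primitive form. If we had $\deg c<\deg a$, this swapped form would have first coefficient of degree strictly less than $\deg a$, contradicting the choice of $g$. Hence $\deg a\leq\deg c$, completing the proof. The argument is elementary and I do not foresee a real obstacle; the only mildly delicate point is the initial step of ensuring $a\neq 0$, which relies on the observation that $\SL_2(A)$ acts transitively enough to avoid the degenerate case $a=0$.
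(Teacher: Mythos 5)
Your proof is correct and follows essentially the same Gauss‐style reduction argument as the paper: both use the translation $\begin{pmatrix}1 & m\\ 0 & 1\end{pmatrix}$ together with Euclidean division by $2a$ (where $2\in\F_q^\times$ is needed), the swap $\begin{pmatrix}0 & -1\\ 1 & 0\end{pmatrix}$, and a minimality principle over the proper equivalence class. The only difference is which quantity is minimized: you minimize $\deg a$ and deduce $\deg b<\deg a$ from the division step and $\deg a\leq\deg c$ from minimality, whereas the paper minimizes $\deg b$, obtains $\deg b<\deg a$ and $\deg b<\deg c$ directly from that minimality via the translation in each variable, and applies the swap once at the end if $\deg a>\deg c$; either bookkeeping works equally well. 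One small remark: your final step tacitly uses $c\neq 0$ (otherwise the swapped form lies outside the set you minimized over), and the paper's translation step tacitly uses $a\neq 0$; both are automatic whenever the discriminant $b^2-4ac$ is a non‐square in $F$, which is the only situation in which the lemma is invoked (Theorem~\ref{thm-ClO-QF}).
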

\begin{proof} Among all forms properly equivalent to the given one, pick 
$f(x, y)=ax^2+bxy+cy^2$ so that $\deg b$ is as small as possible  
(note that $b$ can be zero, in which case, by our convention, $\deg 0=-\infty$). 

If  $\deg a \leq \deg b$, then 
$f$ is properly equivalent to 
	$$g(x, y)=f(x+my, y)=ax^2+(2am+b)xy+c'y^2$$
	for some $m\in A$. Using the division algorithm in $A$, we can choose $m$ so that $\deg(2am+b)<\deg a$, 
	which contradicts our choice of $f(x, y)$. Thus, $\deg b < \deg a$, and the inequality $\deg b<\deg c$ follows similarly. 
	
	If $\deg a>\deg c$, we need to interchange the outer coefficients, which is accomplished by the 
	proper equivalence $(x, y)\mapsto (-y, x)$. The resulting form satisfies $\deg b < \deg a \leq \deg c$. 
\end{proof}

Let $\Fi$ be the completion of $F$ with respect to the absolute value $|\cdot|$, and $\Ci$ be the completion of 
the algebraic closure $\Fi^\alg$. 
We use the same notation for the unique extension of $|\cdot|$ to $\Ci$. The \textit{imaginary part} of $z\in \Ci$ is 
$$
|z|_i:=\min_{x\in \Fi} |z-x|. 
$$
Obviously, $|z|_i\leq |z|$. 

\begin{lemma}\label{disc-reduced-form}
	Let $f(x, y)=ax^2+bxy+cy^2$ be a primitive quadratic form over $A$ with discriminant $\Delta$. Assume $F(\sqrt{\Delta})$ 
	is imaginary and $f(x, y)$ satisfies \eqref{reduced}. Then 
	\begin{equation}\label{eq-FundDom}
	1\leq \left|\frac{-b+\sqrt{\Delta}}{2a}\right|_i = \left|\frac{-b+\sqrt{\Delta}}{2a}\right|\leq |\sqrt{\Delta}|. 
	\end{equation}
\end{lemma}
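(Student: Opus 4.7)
The plan is to first use the reduction conditions $\deg b<\deg a\leq \deg c$ to pin down the absolute value of $\Delta$, then to compute $|z|$ directly via the norm map (which gives the bounds $1\leq |z|\leq |\sqrt{\Delta}|$), and finally to establish $|z|_i=|z|$ by reducing the claim to an auxiliary estimate on $\Fi$ which encodes the imaginary hypothesis.

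Set $z:=\frac{-b+\sqrt{\Delta}}{2a}$. From $\deg b<\deg a\leq \deg c$ I would observe $2\deg b<\deg a+\deg c$, so $|b|^2<|ac|$; combined with $|4|=1$ (as $q$ is odd), the ultrametric property gives $|\Delta|=|b^2-4ac|=|ac|$, and hence $|\sqrt{\Delta}|=|a|^{1/2}|c|^{1/2}$. The imaginary hypothesis on $F(\sqrt{\Delta})/F$ ensures that $|\cdot|$ extends uniquely from $\Fi$ to $F(\sqrt{\Delta})\hookrightarrow\Ci$, and a direct computation yields $N_{F(\sqrt{\Delta})/F}(z)=(b^2-\Delta)/(4a^2)=c/a$; consequently $|z|^2=|c|/|a|$. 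The inequality $\deg a\leq \deg c$ immediately yields $|z|\geq 1$, while $|a|\geq 1$ (since $a\in A$ is nonzero, as forced by $\deg b<\deg a$) together with $|z|=|\sqrt{\Delta}|/|a|$ yields $|z|\leq |\sqrt{\Delta}|$.

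For the equality $|z|_i=|z|$, since $|z|_i\leq |z|$ is automatic I need only prove $|z-x|\geq |z|$ for every $x\in \Fi$. I would write $z-x=w+\sqrt{\Delta}/(2a)$ with $w:=-b/(2a)-x\in \Fi$; the norm formula then gives $|z-x|^2=|w^2-\Delta/(4a^2)|$, and setting $u:=2aw\in \Fi$ reduces the claim to
$$
|u^2-\Delta|\geq |\Delta|\quad \text{for all }\ u\in \Fi.
$$
Note that the choice $x=-b/(2a)\in F\subseteq \Fi$, corresponding to $u=0$, achieves equality, so the bound is sharp. I expect this auxiliary inequality to be the main obstacle of the proof, as it is the step in which the imaginary hypothesis on $F(\sqrt{\Delta})$ is essential.

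To establish this inequality, I would split into the two cases of Remark \ref{rem-inf}. If $|u|^2\neq |\Delta|$, then the ultrametric property in $\Fi$ forces $|u^2-\Delta|=\max(|u|^2,|\Delta|)\geq |\Delta|$. The equality case $|u|^2=|\Delta|$ requires $\deg\Delta$ to be even, ruling out the ramified subcase (odd $\deg\Delta$); in the remaining inert subcase, $\Delta$ has leading coefficient $\delta\in \F_q^\times\setminus (\F_q^\times)^2$, so writing $u=\epsilon T^{\deg\Delta/2}+(\text{lower order})$ with $\epsilon\in \F_q^\times$ produces a leading coefficient $\epsilon^2-\delta\neq 0$ for $u^2-\Delta$, and again $|u^2-\Delta|=|\Delta|$. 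Combining the two cases completes the proof plan.
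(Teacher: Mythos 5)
Your proof is correct. The first part (degree bookkeeping giving $|\Delta|=|ac|$, hence $|z|^2=|c|/|a|\geq 1$ and $|z|=|\sqrt{\Delta}|/|a|\leq|\sqrt{\Delta}|$) matches the paper's argument, just expressed via the norm instead of the strong triangle inequality directly. Where you genuinely diverge is the middle equality $|z|_i=|z|$. The paper works with the $1/T$-expansion of $z$ itself in $\Ci$: when $\deg\Delta$ is odd it simply observes $\log_q|z|\notin\Z$ forces $|z-x|\geq|z|$; when $\deg\Delta$ is even it uses $\Fi(\sqrt{\Delta})=\F_{q^2}\Fi$ and shows the leading coefficient of $z$ lies in $\F_{q^2}\setminus\F_q$, so cannot be cancelled by any $x\in\Fi$. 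You instead push the whole problem through the norm map, reducing cleanly to the one-variable inequality $|u^2-\Delta|\geq|\Delta|$ for $u\in\Fi$, which you then prove by the same parity split. The two arguments use the imaginary hypothesis in exactly the same place (via Remark \ref{rem-inf}: odd degree, or non-square leading coefficient of $\Delta$), but your reduction keeps everything inside $\Fi$ and avoids manipulating expansions over $\F_{q^2}$, which is arguably tidier; the paper's odd-degree shortcut is marginally faster in that case. Both approaches are sound, and the computational content is the same.
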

\begin{proof}
	We have $\Delta=b^2-4ac$. If $\deg b <\deg a \leq \deg c$,  then 
	$$
	\deg \Delta =\deg(b^2-4ac)=\deg(ac)\geq 2\deg a. 
	$$
	Hence $|\sqrt{\Delta}|\geq |a|>|b|$. By the strong triangle inequality, we get 
	\begin{equation}\label{eq-deg-disc}
	\left|\frac{-b+\sqrt{\Delta}}{2a}\right|=\frac{|-b+\sqrt{\Delta}|}{|a|}=\frac{|\sqrt{\Delta}|}{|a|}. 
	\end{equation}
	Since $|a|\geq 1$, we have 
	$$
	1\leq \frac{|\sqrt{\Delta}|}{|a|}\leq |\sqrt{\Delta}|. 
	$$
	This proves the outer two inequalities of \eqref{eq-FundDom}. 
	
	It remains to prove the middle equality.  From \eqref{eq-deg-disc} we get 
	$$
	\log_q \left|\frac{-b+\sqrt{\Delta}}{2a}\right| = \log_q \frac{|\sqrt{\Delta}|}{|a|} = \frac{\deg \Delta}{2}-\deg a. 
	$$
	First suppose $\deg \Delta$ is odd. Then $\log_q \left|\frac{-b+\sqrt{\Delta}}{2a}\right| \not\in \Z$. 
	In this case the desired middle equality follows from the more general fact that 
	if $z\in \Ci$ is such that $\log_q |z| \not\in \Z$, then $|z|= |z|_i$, as we now explain. 
	On one hand, 
	if  $\log_q |z| \not\in \Z$, then for any $x \in F_\infty$
	we have
	$|z| \neq |x|$. 
	As such, the strong triangle inequality  implies
	$|z-x| = \max\left\{|z|, |x|\right\}\geq |z|$, showing that $|z|_i\geq |z|$. On the other hand, $|z|_i\leq |z|$.
	Thus 
	we must have $|z|_i=|z|$. 
	
	Next suppose $u:=\deg \Delta$ is even. Since $F(\sqrt{\Delta})$ is assumed to be imaginary, the leading 
	coefficient of $\Delta$ is not a square in $\F_q^\times$ (see Remark \ref{rem-inf}) and 
	$\Fi(\sqrt{\Delta})=\F_{q^2}\Fi$.  
	Choosing $1/T$ as the uniformizer of $\F_{q^2}\Fi$, we can expand 
	$\sqrt{\Delta}=\alpha \left(\frac{1}{T}\right)^{-u/2}+\text{higher degree terms in }1/T$, where $\alpha\in \F_{q^2}-\F_q$. 
	Since $u/2\geq \deg a>\deg b$, the $1/T$-expansion of $(-b+\sqrt{\Delta})/2a$ is 
	$$
	\frac{-b+\sqrt{\Delta}}{2a} = \beta \left(\frac{1}{T}\right)^{-v}+\text{higher degree terms in }1/T, 
	$$
where $v:=\frac{u}{2}-\deg a\geq 0$ and $\beta\in \F_{q^2}-\F_q$. If $\left|\frac{-b+\sqrt{\Delta}}{2a}-x\right|< \left|\frac{-b+\sqrt{\Delta}}{2a}\right|$ 
for some $x\in \Fi$, then the $1/T$-expansion of $x$ must have the form $\beta \left(\frac{1}{T}\right)^{-v}+\text{higher degree terms in }1/T$. But this is 
not possible since $\beta\not\in \F_q$. Therefore, $\left|\frac{-b+\sqrt{\Delta}}{2a}-x\right|\geq \left|\frac{-b+\sqrt{\Delta}}{2a}\right|$ 
for all $x\in \Fi$, which implies $\left|\frac{-b+\sqrt{\Delta}}{2a}\right|_i=\left|\frac{-b+\sqrt{\Delta}}{2a}\right|$. 
\end{proof}

%*************************************
%*************************************
%*************************************
\section{The $j$-invariant of a rank 2 Drinfeld module}\label{sjInv}
%*************************************
%*************************************
%*************************************

Let $\gamma: A\longrightarrow L$ be an $A$-field and  let $\psi: A \longrightarrow L\{\tau\}$ be a  Drinfeld module over $L$ of rank $2$, 
defined by $\psi_T = \gamma(T) + g_1\tau + g_2 \tau^2$
for some $g_1, g_2 \in L$ with $g_2 \neq 0$. Two Drinfeld modules $\psi$ and $\phi$ are said to be isomorphic over an extension $L'$ of $L$ 
if $\psi_T=c^{-1}\phi_T c$ for some $c\in L'$. 
The quantity
\begin{equation}\label{def-j-invariant}
j(\psi) := \frac{g_1^{q+1}}{g_2} \in L
\end{equation}
is called the {\emph{$j$-invariant of $\psi$}}. It is easy to show that two Drinfeld modules $\phi$ and $\psi$ of rank $2$  
are isomorphic over $L^\alg$ if and only if $j(\phi)=j(\psi)$. 

Now assume $L =\Ci$. Let $\Omega:=\Ci-\Fi$ be the Drinfeld half-plane. 
The group $\GL_2(A)$ acts on $\Omega$ by linear fractional transformations 
$$
\begin{pmatrix}a & b \\ c & d
\end{pmatrix}z
:=\frac{a z + b}{c z + d}. 
$$
The set 
\begin{equation}\label{eq-FundDomain}
\cF:=\{z\in \Omega\ :\ |z|=|z|_i\geq 1 \}
\end{equation}
is as close as possible to a ``fundamental domain'' for the action of $\GL_2(A)$ on $\Omega$; 
see \cite[Prop. 6.5]{Ge97}. 
In particular, every element of $\Omega$ is $\GL_2(A)$-equivalent to some element of $\cF$. 

To each $z\in \Omega$, we associate the lattice $A+Az\subset \Ci$. By the analytic theory of Drinfeld 
modules, the lattice $A+Az$ corresponds to a Drinfeld module $\psi^z$ of rank $2$ defined over $\Ci$. Moreover, 
it can be shown  that $\psi^z\cong \psi^{z'}$ (over $\Ci$)  if and only if $z=\gamma z'$ for some $\gamma\in \GL_2(A)$; see \cite{Dr74}. 
Therefore, 
the map $z\longmapsto \psi^z$ induces a bijection between the orbits $\GL_2(A)\setminus \Omega$ and 
the isomorphism classes of rank $2$ Drinfeld modules over $\Ci$. Thanks to these properties,
there exists a $\GL_2(A)$-invariant function
\begin{equation}\label{j-function}
j : \Omega \longrightarrow \C_{\infty}, \quad j(z) := j\left(\psi^z\right). 
\end{equation}

\begin{theorem}\label{thm-gekeler-j-growth}
	For $z\in \cF$, we have $\log_q|j(z)|< q^2\cdot |z|$. 
\end{theorem}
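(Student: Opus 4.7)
The plan is to combine the identity $j(z) = g_1(z)^{q+1}/g_2(z)$ from \eqref{def-j-invariant} with Gekeler's description of the coefficients $g_1, g_2$ of the Drinfeld module $\psi^z$ in terms of the local uniformizer $u(z) = 1/e_C(\bar\pi z)$ at the cusp of $\GL_2(A) \backslash \Omega$, as developed in \cite{Ge97}. Both $g_1(z)$ and $g_2(z)$ admit explicit $u$-expansions: $g_1$ is a Drinfeld modular form of weight $q-1$ that does not vanish at the cusp, while $g_2$ is essentially the Drinfeld discriminant, a cusp form of weight $q^2-1$ with a product expansion of the shape $g_2(z) = c_0 \cdot u(z) \cdot \prod_{a \in A,\, a\text{ monic}} f_a(u(z))^{q^2-1}$, where $c_0 \in \Ci^\times$ is an explicit constant and each $f_a$ is a polynomial in $u$ with constant term $1$.

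First, we would estimate $|g_1(z)|$ from above and $|g_2(z)|$ from below for $z \in \cF$. For the upper bound, the Eisenstein-series representation of $g_1$ together with the inequality $|az+b| \geq 1$ for all nonzero $(a,b) \in A^2$ and $z \in \cF$ (a consequence of $|z|_i \geq 1$, Lemma \ref{disc-reduced-form}) yields $|g_1(z)| \leq C_1$ for an absolute constant $C_1$. For the lower bound, since $|u(z)|$ is small for $z \in \cF$, each factor $f_a(u(z))$ has norm $1$ by the non-Archimedean triangle inequality, so $|g_2(z)| = |c_0| \cdot |u(z)|$.

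Next, the plan is to bound $|u(z)|$ from below, equivalently $|e_C(\bar\pi z)|$ from above, using the defining series $e_C(w) = \sum_{i \geq 0} w^{q^i}/D_i$ with $|D_i| = q^{i q^i}$. Setting $w = \bar\pi z$, the non-Archimedean strong triangle inequality gives
\begin{equation*}
\log_q |e_C(\bar\pi z)| \leq \max_{i \geq 0} q^i\!\left(\log_q|\bar\pi z| - i\right).
\end{equation*}
A direct analysis of the function $i \mapsto q^i(\log_q|\bar\pi z| - i)$ (whose unique maximum over the reals is attained near $i = \log_q|\bar\pi z| - 1/\ln q$) shows that the right-hand side is bounded by a linear function of $|z|$ with leading coefficient strictly less than $q^2$, uniformly in $q$.

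Putting the three estimates together gives $\log_q|j(z)| = (q+1)\log_q|g_1(z)| - \log_q|g_2(z)| < q^2 \cdot |z|$ for $z \in \cF$, as required. The main obstacle will be the Carlitz-series estimate: identifying the dominant term and, more importantly, ruling out cancellations among terms sharing the maximum norm. This requires careful bookkeeping with the coefficients $1/D_i$, but ultimately reduces to a finite combinatorial check afforded by the non-Archimedean property of $|\cdot|$ and the fact that the exponents $i q^i$ are strictly convex in $i$.
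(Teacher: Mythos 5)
The paper itself does not prove this theorem: the entire ``proof'' is the single citation ``See Theorem~6.6 in [Ge97],'' so there is no internal argument in the paper to compare against. Your sketch is a plausible reconstruction of Gekeler's argument; the route via $j = g_1^{q+1}/g_2$, the Eisenstein-series bound on $g_1$, the product expansion of the discriminant $g_2$, and the Carlitz-exponential estimate on $|u(z)|^{-1} = |e_C(\bar\pi z)|$ is exactly the standard machinery, and the final numerical check (the maximum of $i \mapsto q^i(\log_q|\bar\pi z| - i)$ is roughly $|\bar\pi z|/(e\ln q) = q^{q/(q-1)}|z|/(e\ln q)$, which is comfortably below $q^2|z|$ for every prime power $q$) does come out in favor of the bound.

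One point of confusion worth flagging: you identify ``ruling out cancellations among terms sharing the maximum norm'' in the Carlitz series as the main obstacle, but for the direction you actually need that is a non-issue. The theorem requires an \emph{upper} bound on $|j(z)|$, hence an upper bound on $|e_C(\bar\pi z)|$, and the strong triangle inequality $|e_C(\bar\pi z)| \leq \max_i |\bar\pi z|^{q^i}/|D_i|$ holds unconditionally --- cancellations can only make the left side smaller, which helps you. Where cancellations (or more precisely a \emph{lower} bound on $|e_C(\bar\pi z)|$) would genuinely matter is in justifying the step you currently take for granted, namely that $|u(z)|$ is small enough on $\cF$ for every factor $f_a(u(z))$ in the product expansion of $g_2$ to be a unit. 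That requires both a lower bound on $|e_C(\bar\pi z)|$ for $z\in\cF$ (where ties between dominant terms do need attention) and control of the valuations of the $A$-coefficients of the $f_a$. So the ``hard part'' is not where you located it; it sits in the $|g_2(z)| = |c_0|\,|u(z)|$ claim rather than in the Carlitz-series maximum. Modulo relocating that care, the plan would go through and reproduces the content of Gekeler's Theorem~6.6.
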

\begin{proof}
	See Theorem 6.6 in \cite{Ge97}. 
\end{proof}

A Drinfeld module $\psi$ of rank $2$ over $\Ci$ is said to have \textit{complex multiplication} if $\End_{\Ci}(\psi)\neq A$. 
\begin{theorem}\label{thm-CM}
	Suppose that the Drinfeld module $\psi:=\psi^z$  defined by some $z\in \Omega$ has complex multiplication. Then
	the following properties hold.
	\begin{enumerate}
		\item[(i)] $K:=F(z)$ is an imaginary quadratic extension of $F$. 
		\item[(ii)] $\cO := \End_{\Ci}(\psi)$ is an $A$-order in $K$.
		\item[(iii)] $K(j(\psi))/K$ is a finite abelian extension. 
		\item[(iv)] $j(\psi)$ is integral over $A$. 
		\item[(v)] $\Gal(K(j(\psi))/K) \cong \Cl(\cO)$.
		\item [(vi)]
		$
		\left\{\sigma(j(\psi))\mid \sigma\in \Gal(K(j(\psi))/K) \right\}= 
		\left\{j\left(\frac{-b+\sqrt{\Delta_\cO}}{2a}\right)\ \bigg|\ [ax^2+bxy+cy^2]_{\Delta_\cO}/\SL_2(A)\right\},
		$
		
		\noindent
		where $[ax^2+bxy+cy^2]_{\Delta_\cO}/\SL_2(A)$ denotes the proper equivalence class of the primitive 
		quadratic form $ax^2+bxy+cy^2$ of discriminant $\Delta_\cO$. 
	\end{enumerate}
\end{theorem}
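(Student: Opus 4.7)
The plan is to leverage the analytic description of rank $2$ Drinfeld modules via lattices in $\Ci$ and then invoke the function field analogue of the main theorem of complex multiplication.

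For (i) and (ii), I would work with the lattice $\Lambda_z := A + Az$ corresponding to $\psi = \psi^z$. Under the analytic uniformization, the endomorphism ring is identified with the set of multipliers
\[
\End_{\Ci}(\psi^z) \cong \{\lambda \in \Ci : \lambda \Lambda_z \subseteq \Lambda_z\}.
\]
For any such $\lambda$, write $\lambda \cdot 1 = a + bz$ and $\lambda \cdot z = c + dz$ with $a,b,c,d \in A$. If $\psi$ has complex multiplication, there exists $\lambda \notin A$, forcing $b \neq 0$, and the relation $(a+bz)z = c + dz$ shows that $z$ satisfies $bz^2 + (a-d)z - c = 0$, so $K = F(z)$ is a quadratic extension. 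Since $z \in \Omega = \Ci \setminus \Fi$, no embedding of $K$ lands in $\Fi$, and thus $\infty$ does not split in $K/F$; hence $K$ is imaginary. The map $\lambda \mapsto a + bz$ embeds $\cO$ in $K$, and $\cO$ is integral over $A$ (because its elements preserve $\Lambda_z$, a finitely generated $A$-module), so $\cO \subseteq B$. Finally, $\cO$ has $A$-rank $2$ (it contains $A$ properly and sits inside the rank-$2$ module $B$), which forces $[B : \cO] < \infty$ and identifies $\cO$ as an $A$-order.

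For (iii), (iv), (v), I would use the fact that $\cO$ acts on lattices: for a proper fractional $\cO$-ideal $\mathfrak{a}$, the lattice $\mathfrak{a}^{-1} \Lambda_z$ is another rank $2$ lattice with multiplier ring exactly $\cO$, giving a Drinfeld module $\psi^{\mathfrak{a}}$. This produces a simply transitive action of $\Cl(\cO)$ on the set of isomorphism classes of rank $2$ Drinfeld modules over $\Ci$ with endomorphism ring $\cO$; in particular, this set is finite of cardinality $h(\cO)$. The integrality of $j(\psi)$ in (iv) would follow from the good-reduction criterion for CM Drinfeld modules: $\psi$ has potentially good reduction at every finite place of $K(j(\psi))$, forcing $j(\psi)$ to have non-negative valuation everywhere, hence to lie in the integral closure of $A$. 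For (iii) and (v), one invokes the Drinfeld/Hayes analogue of the main theorem of complex multiplication, which identifies $K(j(\psi))$ as the ring class field of $\cO$ and transports the $\Cl(\cO)$-action on lattices to the Galois action on $j$-invariants, yielding the isomorphism $\Gal(K(j(\psi))/K) \cong \Cl(\cO)$ together with the finiteness and abelianness of the extension.

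For (vi), the plan is to combine (v) with Theorem \ref{thm-ClO-QF}. The latter gives a bijection between proper equivalence classes of primitive quadratic forms $ax^2 + bxy + cy^2$ of discriminant $\Delta_{\cO}$ and $\Cl(\cO)$, sending a form to the ideal class of $A + \frac{-b + \sqrt{\Delta_\cO}}{2a} A$. Under this correspondence the Drinfeld module attached to such an ideal is $\psi^{z_0}$ for $z_0 = \frac{-b + \sqrt{\Delta_\cO}}{2a}$, so its $j$-invariant is $j\bigl(\frac{-b + \sqrt{\Delta_\cO}}{2a}\bigr)$. Since by (v) the Galois orbit of $j(\psi)$ consists precisely of the $j$-invariants of the Drinfeld modules corresponding to the $h(\cO)$ ideal classes, the displayed description follows.

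The hardest step is establishing (iii) and (v): these are substantive results of CM theory in the Drinfeld setting. Rather than reprove them, I would cite the analytic theory of \cite{Dr74} for the bijection between lattices up to homothety and Drinfeld modules up to isomorphism, and the function field CM theory (as developed by Hayes and refined in \cite{Ge83}) for the identification of $K(j(\psi))$ with the ring class field of $\cO$ and the Galois-theoretic description of its conjugates. Everything else in (i), (ii), (iv), (vi) is then a direct consequence of these inputs combined with Theorem \ref{thm-ClO-QF}.
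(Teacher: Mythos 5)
Your proposal is correct and follows essentially the same route as the paper: parts (i)--(v) are delegated to the Drinfeld module CM theory of Gekeler (the paper cites Section 4 of \cite{Ge83}, in particular Corollary 4.5 for the description of the Galois conjugates of $j(\psi)$ via proper fractional ideal classes), and part (vi) is obtained by composing that description with the ideal-class/quadratic-form correspondence of Theorem \ref{thm-ClO-QF}. The extra lattice-multiplier details you supply for (i)--(ii) are a standard unfolding of what the citation covers and do not change the strategy.
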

\begin{proof}
	For (i)-(v), see Section 4 in \cite{Ge83}. 
	For (vi), proceed as follows.
	On one hand, by \cite[Cor. 4.5]{Ge83}, the set of 
	Galois conjugates of $j(\psi)$ is equal to the set $\{j(z')\}$, where $A+Az'$ runs over the equivalence 
	classes of  proper fractional ideals of $\cO$. On the other hand, Theorem \ref{thm-ClO-QF} gives explicit expressions 
	for representatives of these ideal classes in terms of the equivalence classes of quadratic forms. 
	This completes the proof.
\end{proof}

%*************************************
%*************************************
%*************************************
\section{Proof of Theorems \ref{thm1} and \ref{thm2}}
%*************************************
%*************************************
%*************************************

Let $\psi$ be a Drinfeld module of rank $2$ over $F$ 
and 
 let $\fp\lhd A$ be a fixed prime where $\psi$ has good reduction. Let $\psi\otimes \F_\fp$ be the 
reduction of $\psi$ at $\fp$. As we mentioned in the introduction, $\cE_{\psi, \fp}:=\End_{\F_\fp}(\psi\otimes \F_\fp)$ 
is an $A$-order in the imaginary quadratic extension $F(\pi_\fp)$ of $F$. 
Since $\fp$ remains  fixed in this section, for simplicity of notation in the proofs below we write 
$\cE := \cE_{\psi, \fp}$, $\Delta := \Delta_{\fp}$,
 and $K:=F(\pi_\fp)$.

\subsection{Proof of Theorem \ref{thm1}}
\begin{proposition}\label{prop-CMlift}
	There exists a Drinfeld module $\Psi$ of rank $2$ over $\Ci$ for which the following properties hold.
	\begin{enumerate}
		\item[(i)] $\End_{\Ci}(\Psi)=\cE$. 
		\item[(ii)] There exists a prime $\fP$ of $K(j(\Psi))$ lying over $\fp$ such that $j(\Psi)\equiv j(\psi)\mod \fP$. 
	\end{enumerate}
\end{proposition}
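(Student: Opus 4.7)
The plan is to exhibit $\Psi$ as a characteristic-$0$ CM lift (meaning an $A$-CM Drinfeld module over $\Ci$) of the reduction $\psi\otimes \F_\fp$, and the whole argument reduces to the Drinfeld-module analogue of Deuring's lifting lemma established in \cite{CoPa15}, which the introduction has already flagged as a key input.

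First, I would set the stage using Theorem \ref{thm-CM}. Because $\cE = \cE_{\psi,\fp}$ is an $A$-order in the imaginary quadratic field $K = F(\pi_\fp)$, the isomorphism classes of rank-$2$ Drinfeld modules over $\Ci$ with endomorphism ring equal to $\cE$ form a finite nonempty set indexed by $\Cl(\cE)$, and every such module has $j$-invariant lying in the ring class field $H_\cE := K(j(\Psi))$ and integral over $A$. This guarantees that the class of candidates for $\Psi$ (those satisfying (i)) is nonempty, finite, and has $j$-values which are $A$-integers in $H_\cE$, so it makes sense to talk about reductions modulo primes $\fP$ of $H_\cE$ lying above $\fp$.

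Second, I would apply Deuring's lifting lemma from \cite{CoPa15} to the Drinfeld module $\psi\otimes \F_\fp$ over $\F_\fp$: since $\End_{\F_\fp}(\psi \otimes \F_\fp) = \cE$ is an $A$-order of rank $2$ in $K$, that lemma produces a rank-$2$ Drinfeld module $\Psi$ over $\Ci$ together with a prime $\fP$ of $H_\cE$ lying above $\fp$ such that (a) $\End_{\Ci}(\Psi) = \cE$ and (b) the reduction of $\Psi$ at $\fP$ is isomorphic, over the residue field of $\fP$, to $\psi\otimes \F_\fp$. Condition (a) is exactly (i) of the Proposition.

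Third, (ii) follows by translating the isomorphism of reductions into an equality of $j$-invariants. Reducing $j(\Psi)\in \cO_{H_\cE}$ modulo $\fP$ gives the $j$-invariant of the reduction of $\Psi$ at $\fP$, while the image of $j(\psi)\in A_{\fp}$ in the residue field $\F_\fp$ equals $j(\psi\otimes \F_\fp)$. Since the two reductions are isomorphic over the residue field, their $j$-invariants agree there, which is precisely the congruence $j(\Psi) \equiv j(\psi) \pmod{\fP}$.

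The only nontrivial step is Step 2, namely producing a CM lift whose endomorphism ring is exactly $\cE$ (rather than some strictly larger $A$-order) and whose reduction at a chosen prime above $\fp$ recovers the given $\psi \otimes \F_\fp$; this control on both the endomorphism ring and the reduction is the exact content of the Drinfeld analogue of Deuring's lifting lemma, so once that result is cited the remainder of the argument is bookkeeping about $j$-invariants.
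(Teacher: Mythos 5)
Your overall strategy is the same as the paper's (both hinge on the Drinfeld-module Deuring lifting lemma from \cite{CoPa15}), but you have packed too much into the citation, and in doing so you have elided several nontrivial steps that the paper carries out explicitly. What \cite[Thm. 22]{CoPa15} actually produces is a Drinfeld module $\Psi$ over a discrete valuation ring $R$ (with maximal ideal $\cM$, residue field $\F_\fp$, and $\cM \cap A = \fp$) satisfying only the \emph{one-sided} containment $\cE \subseteq \End_R(\Psi)$, together with the property that $\Psi \bmod \cM \cong \psi \otimes \F_\fp$. It does not, on its own, give you a module over $\Ci$ with $\End_{\Ci}(\Psi) = \cE$ exactly, nor does it hand you a prime of $K(j(\Psi))$. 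To get equality of endomorphism rings one still needs the reduction-injection $\End_L(\Psi) \hookrightarrow \End_{\F_\fp}(\Psi \otimes \F_\fp)$ (deduced from \cite[Lem.\ 3.3]{Ge83}, where $L = \mathrm{Frac}(R)$), which squeezes $\cE \subseteq \End_L(\Psi) \subseteq \cE$. One then needs to show $K \subseteq L$ (via the action of $\cE$ on the tangent space) so that $K(j(\Psi)) \subseteq L$ and $\fP := \cM \cap K(j(\Psi))$ is a well-defined prime with residue field $\F_\fp$. Finally, after embedding $L \hookrightarrow \Ci$, one must rule out the possibility that the endomorphism ring grows; this uses that $\End_{\Ci}(\Psi)/\End_L(\Psi)$ is a free $A$-module and that $\rank_A \End_{\Ci}(\Psi) \leq 2$.

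You do flag in your closing paragraph that ``control on both the endomorphism ring and the reduction'' is needed, but attributing that control entirely to the cited lemma misstates its conclusion; the precise form of control (equality of $\End$, construction of $\fP$, stability under base change to $\Ci$) is argued separately in the paper, not delivered by the lemma. Your Step 3 (translating the isomorphism of reductions into $j(\Psi) \equiv j(\psi) \bmod \fP$) is correct and matches the paper's bookkeeping, and your Step 1 observation about $j(\Psi)$ being integral over $A$ (so that reduction mod $\fP$ is meaningful) is also used implicitly in the paper. The fix is simply to state the lifting lemma's actual output (a module over a DVR with $\cE \subseteq \End_R(\Psi)$), and then supply the three missing arguments above.
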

\begin{proof} 
	Note that for (ii) we implicitly use Theorem \ref{thm-CM}: assuming (i), by Theorem \ref{thm-CM}, $j(\Psi)$ 
	is algebraic over $A$, so $K(j(\psi))$ is a finite algebraic extension of $F$ and $j(\Psi)\mod \fP$ makes sense. 
	
	Since the rank of $\psi\otimes \F_\fp$ is $2$, by \cite[Prop. 24]{CoPa15}, the field $K$ is ``good" for $\psi\otimes \F_\fp$ 
	in the sense of \cite{CoPa15}. Then by Theorem 22 and its proof in \cite{CoPa15}, 
	there exists a discrete valuation ring $R$ with maximal ideal $\cM$,  
	equipped with an injective homomorphism $\gamma: A\to R$, and having the properties:
	\begin{enumerate}
		\item[(a)] $\cM\cap A=\fp$ and $R/\cM\cong A/\fp$;
		\item[(b)] there exists a Drinfeld module $\Psi: A\to R\{\tau\}$ of rank $2$ such that $\cE\subseteq \End_R(\Psi)$;
		\item[(c)] $\Psi\otimes\F_\fp:=\Psi \mod \cM$ is isomorphic to $\psi\otimes\F_\fp$ over $\F_\fp$.  
	\end{enumerate}
	It is not hard to deduce from \cite[Lem. 3.3]{Ge83} that under reduction modulo $\cM$ we get an injection 
$\End_{L}(\Psi)\hookrightarrow \End_{\F_\fp}(\Psi\otimes\F_\fp)$, where $L$ is the fraction field of $R$. 
Hence $\cE\subseteq \End_{L}(\Psi) \subseteq \End_{\F_\fp}(\Psi\otimes\F_\fp)=\cE$, which implies that 
$\End_L(\Psi)=\cE$. 
By considering the action of $\cE$ on the tangent space of $\Psi$, one deduces that $K$ is a subfield of $L$. 
Thus $K(j(\Psi))$ is a subfield of $L$. Let $\fP:=\cM\cap K(j(\Psi))$. 
Since $A/\fp\subseteq \cO_{K(j(\Psi))}/\fP\subseteq R/\cM$, 
$\fP$ is a maximal ideal of the integral closure $\cO_{K(j(\Psi))}$ of $A$ in $K(j(\Psi))$, with residue field $\F_\fp$. 
From the construction, it is clear that $\Psi\mod \fP$ is isomorphic to $\psi\otimes\F_\fp$ over $\F_\fp$. 
In particular, $j(\Psi)\equiv j(\psi)\mod \fP$. 

Finally, we can embed $L$ into $\Ci$ and consider $\Psi$ as a Drinfeld module over $\Ci$. Since $\End_{\Ci}(\Psi)/\End_L(\Psi)$ is a free $A$-module 
and $\rank_A\End_{\Ci}(\Psi)\leq 2$, we conclude that $\End_{\Ci}(\Psi)=\cE$.
\end{proof}

Now assume that $\End_{F^\alg}(\psi)=A$. Let $\Psi$ be a Drinfeld module over $\Ci$ as in Proposition \ref{prop-CMlift}. 
We have $j(\psi)\neq j(\Psi)$ (as elements of $\Ci$), since otherwise 
$\psi\cong \Psi$ over $\Ci$, which would imply $\End_{F^\alg}(\psi)=\End_{\Ci}(\psi)=\cE$, a contradiction. 
Write $j(\psi)=n/m$ 
with relatively prime $n, m\in A$. Let $\fP$ be the prime of $K(j(\Psi))$ from Proposition \ref{prop-CMlift}. Then $j(\Psi)\equiv \frac{n}{m}\mod \fP$ 
and $j(\Psi)\neq \frac{n}{m}$
imply $0\neq n-m\cdot j(\Psi)\in \fP$. By Theorem \ref{thm-CM}, $K(j(\Psi))/K$ is an abelian extension and 
\begin{align*}
\mathrm{Nr}_{K(j(\Psi))/K}(n-m\cdot j(\Psi)) & =\prod_{\sigma\in \Gal(K(j(\Psi))/K)}(n-m\cdot\sigma(j(\Psi))) \\
&=\prod_{[ax^2+bxy+cy^2]_{\Delta}/\SL_2(A)} \left(n-m\cdot  j\left(\frac{-b+\sqrt{\Delta}}{2a}\right)\right). 
\end{align*}

On one hand,
since $n-m\cdot j(\Psi)\in \fP$, we have $\alpha:=\mathrm{Nr}_{K(j(\Psi))/K}(n-m\cdot j(\Psi))\in \fp'$, where $\fp'$ 
is the prime of $K$ lying under $\fP$. Letting $\alpha'$ be the conjugate of $\alpha$ over $F$,
we obtain that  $\alpha\alpha'\in \fp$,  which gives
 $|\alpha|\geq |p|^{1/2}$. 
 
 On the other hand, by the strong triangle inequality,  
$$
|\alpha|=\prod_{[ax^2+bxy+cy^2]_{\Delta}/\SL_2(A)} \max \left\{|n|, |m|\cdot  \left|j\left(\frac{-b+\sqrt{\Delta}}{2a}\right)\right|\right\}. 
$$

Remark that,
by Lemma \ref{lem-reduced}, we can assume that the triplets $(a, b, c)$ above satisfy \eqref{reduced}. 
Under this assumption, Lemma \ref{disc-reduced-form} implies that $\frac{-b+\sqrt{\Delta}}{2a}\in \cF$, with 
$\cF$ defined in \eqref{eq-FundDomain}. Then Theorem \ref{thm-gekeler-j-growth} and Lemma \ref{disc-reduced-form} imply 
$$
\log_q\left|j\left(\frac{-b+\sqrt{\Delta}}{2a}\right)\right|\leq q^2 \left|\frac{-b+\sqrt{\Delta}}{2a}\right|\leq q^2 \left|\sqrt{\Delta}\right|.
$$

Combining our estimates, we get 
$$
|p|^{\frac{1}{2}}\leq \prod_{[ax^2+bxy+cy^2]_{\Delta}/\SL_2(A)} \max \left\{|n|, |m|\cdot q^{q^2 \left|\sqrt{\Delta}\right|}\right\} 
= \max \left\{|n|, |m|\cdot q^{q^2 \left|\sqrt{\Delta}\right|}\right\}^{h(\cE)}. 
$$
Since $n$ and $m$ are determined by $\psi$, we deduce that
$$
\deg p \ll_{\psi} h(\cE)\cdot \left|\sqrt{\Delta}\right|. 
$$ 
Furthermore, since, by \eqref{bound-order-class-number}, 
$h(\cE)\ll \sqrt{|\Delta|}\cdot (\deg\Delta)^2$, we deduce that
\begin{equation}\label{eqThm1Ver1}
\deg p \ll_{\psi} |\Delta|\cdot (\deg\Delta)^2. 
\end{equation}

We claim that \eqref{eqThm1Ver1} implies 
\begin{equation}\label{eqThm1Ver2}
|\Delta|\gg_{\psi} \frac{\log |p|}{(\log\log |p|)^2}.
\end{equation}
If $|\Delta|\gg \log |p|$, then \eqref{eqThm1Ver2} is immediate. Now assume $|\Delta|\ll \log |p|$, so that 
$\log |\Delta|\ll \log \log |p|$. Then from \eqref{eqThm1Ver1}, we get 
$$|\Delta|\gg_{\psi}  \frac{\log |p|}{(\log|\Delta|)^2}\gg_{\psi} \frac{\log |p|}{(\log \log |p|)^2}. 
$$
This completes the proof of Theorem \ref{thm1}.

\subsection{Proof of Theorem \ref{thm2}} 

We start by recalling more general facts.
Let $\gamma: A\to L$ be an $A$-field and let $\phi$ be a Drinfeld module of rank $r$ over $L$. 
Then $\phi$ endows any field 
extension $L'$ of $L$ with an $A$-module structure, where $m\in A$ acts by $\phi_m$. More precisely, if 
$\phi_m=\gamma(m)+\sum_{1 \leq i  \leq r \deg m}g_i(m)\tau^i$,
that is, 
$\phi_m(x)=\gamma(m)x+\sum_{1 \leq i \leq r \deg m}g_i(m) x^{q^i}\in L[x]$,
then for $\lambda \in L'$ define
$m\circ \lambda:=\phi_m(\lambda)$. 
The \textit{$m$-torsion} $\phi[m]\subset L^\alg$ of $\phi$ 
is the set of zeros of the polynomial $\phi_m(x)$.
 It is clear that $\phi[m]$ has a natural structure of an $A$-module
 and
it is not hard to show that, as $A$-modules,  $\phi[m]\subseteq (A/mA)^{\oplus r}$,
 with an equality if and only if $m$ is relatively prime to $\ker(\gamma)$;  
see \cite{Go96}. 

We return to the Drinfeld module $\psi$ of rank $2$ over $F$ and,  
as in the introduction,
we consider $\F_\fp$ as an $A$-module via the action of $\psi\otimes \F_\fp$ and 
denote it by ${^\psi}\F_\fp$; then
$$
{^\psi}\F_\fp\cong A/d_{1, \fp}A\times A/d_{2,  \fp} A
$$
for uniquely determined nonzero monic polynomials $d_{1, \fp}, d_{2,  \fp}\in A$ such that 
$d_{1,  \fp} \mid d_{2, \fp}$. (Note that there are at most two terms 
because ${^\psi}\F_\fp$ is a finite $A$-module, so for some $d\in A$ we have ${^\psi}\F_\fp\subseteq (\psi\otimes \F_\fp)[d]\subseteq A/dA\times A/dA$.)
Since $\fp$ remains  fixed in this section, for simplicity of notation in this proof we will write 
$d_1 := d_{1,  \fp}$, $d_2 := d_{2, \fp}$.

Suppose $d_1\neq 1$. Then $(\psi\otimes \F_\fp)[d_1]$ is rational over $\F_\fp$, i.e., all roots of $\psi_{d_1}(x)$ are in $\F_\fp$, 
and $p\nmid d_1$. Let $\pi_{\fp}=\tau^{\deg(p)}\in \cE$ be the Frobenius endomorphism of $\psi\otimes \F_\fp$. The fact that 
$(\psi\otimes \F_\fp)[d_1]$ is rational over $\F_\fp$ implies that $\pi_{\fp}=1+d_1\alpha$ for some $\alpha\in \cE$; see the proof of Theorem 1.2 in \cite{GaPa18}.  
From the theory of Drinfeld modules over finite fields (see \cite{Yu95}, \cite{Ge91}) one knows that the minimal polynomial of $\pi_{\fp}$ 
over $A$ has the form 
\begin{equation}\label{char-poly}
P_{\psi, \fp}(X)
=
X^2+a_{\fp}X+\mu_{\fp}  p,
\end{equation}
where $\mu_{\fp} \in \F_q^\times$. Moreover, 
\begin{equation}\label{trace-bound}
\deg a_{\fp} \leq \frac{\deg p}{2}.
\end{equation}
Therefore
$|\pi_{\fp}|=|\pi'_{\fp}|=|p|^{\frac{1}{2}}$, where $\pi'_{\fp}$ denotes the conjugate of $\pi_{\fp}$ over $F$.
In particular,
\begin{equation}\label{eq-Thm2-1}
|p|=|\pi_{\fp}|\cdot |\pi'_{\fp}|=|1+d_1\alpha|\cdot |1+d_1\alpha'|=|d_1\alpha|\cdot |d_1\alpha'|=|d_1|^2\cdot |\alpha\alpha'|, 
\end{equation}
where $\alpha'$ denotes the conjugate of $\alpha$ over $F$.

We write $\alpha=a_1+a_2\sqrt{\Delta}$ for some $a_1, a_2\in A$. Note that $a_2\neq 0$ since $\pi\not\in A$. Then
\begin{equation}\label{eq-Thm2-2}
|\alpha\alpha'|=|a_1^2-a_2^2\Delta|. 
\end{equation}
The leading terms of $a_1^2$ and $a_2^2\Delta$, as polynomials in $T$, cannot cancel. Indeed, $a_1^2$ and $a_2^2$ have 
even degrees and their leading coefficients are squares in $\F_q^\times$, whereas $\Delta$ either has odd degree or its leading 
coefficient is not a square in $\F_q^\times$ (see Remark \ref{rem-inf}). This implies that 
\begin{equation}\label{eq-Thm2-3}
|a_1^2-a_2^2\Delta|\geq |\Delta|. 
\end{equation}

Combining Theorem \ref{thm1} with \eqref{eq-Thm2-1}, \eqref{eq-Thm2-2}, \eqref{eq-Thm2-3}, we obtain 
$$
|p|\geq |d_1|^2\cdot |\Delta|\gg_{\psi} |d_1|^2\cdot  \frac{\log |p|}{(\log\log |p|)^2},
$$
or, equivalently, 
$$
|d_1|\ll_{\psi} \frac{\sqrt{|p|}\cdot \log\log |p|}{\sqrt{\log|p|}}. 
$$
Now recall that $|d_1d_2|=|p|$. Hence 
$$
|d_2|\gg_{\psi} \frac{\sqrt{|p|\cdot \log|p|}}{\log\log |p|}. 
$$
This completes  the proof of Theorem \ref{thm2}. 
 
%*************************************
%*************************************
%*************************************
\section{Proof of Theorem \ref{thm3} }
%*************************************
%*************************************
%*************************************

Let $\psi$ be a Drinfeld module of rank $2$ over $F$ 
and 
 let $\fp\lhd A$ be a fixed prime where $\psi$ has good reduction. 
 As before, let $\psi\otimes \F_\fp$ be the 
reduction of $\psi$ at $\fp$.
 As we mentioned earlier, 
  the rings
 $A[\pi_\fp] \subseteq \cE_{\psi, \fp} \subseteq \cO_{F(\pi_\fp)}$ 
are $A$-orders in the imaginary quadratic extension $K_\fp := F(\pi_\fp)$ of $F$. 
Since $\fp$ varies in this section,  we now specify the dependence on $\fp$ of
$\cE_{\psi, \fp}$, $\Delta_{\fp}$, and $K_\fp$, as well as of all other relevant data.

Similarly to the $A$-module isomorphism
$\cO_{F(\pi_\fp)}/\cE_{\psi, \fp}\cong A/c_{\fp} A$ mentioned in the introduction,
there is an $A$-module isomorphism
$\cE_{\psi, \fp}/A[\pi_\fp]\cong A/b_{\fp}A$, where $b_\fp = b_{\psi, \fp} \in A$ is a monic polynomial. 
Comparing the discriminants of $\cE_\fp$ and $A[\pi_\fp]$ and remarking that the discriminant of  $A[\pi_\fp]$ is the discriminant of the polynomial $P_{\psi, \fp}$ of (\ref{char-poly}), 
we find a (not necessarily monic) generator 
$\delta_\fp = \delta_{\psi, \fp} \in A$ of the ideal $(\Delta_{\fp})$ which is uniquely determined by the relation
\begin{equation}\label{disc-relation}
a_{\fp}^2 - 4 \mu_{\fp} p = b_{\fp}^2 \delta_{\fp}.
\end{equation} 
Taking norms and using (\ref{trace-bound}), we deduce that
\begin{equation}\label{deg-bp-deltap}
|a_\fp^2 - 4 \mu_\fp p| = |b_\fp|^2 |\delta_\fp|.
\end{equation}
Thus to study the growth of the quotient $|\Delta_\fp|/|a_\fp^2 - 4 \mu_\fp p| = |\delta_\fp|/|a_\fp^2 - 4 \mu_\fp p|$ it suffices to study the growth
of $|b_\fp|$.
In particular, letting 
$f: (0, \infty) \longrightarrow (0, \infty)$ be a function such that  $\ds\lim_{x \rightarrow \infty} f(x) = \infty$,
we see that showing that
$$
\#\left\{
\fp  \in {\cal{P}}(\psi):
\deg p = x,
|\Delta_\fp| >
\frac{|a_\fp^2 - 4 \mu_\fp p|}{q^{f(\deg p)}} 
\right\}
\sim
\pi_F(x)
$$
is equivalent to showing that
\begin{equation}\label{bp-little-o}
\#\left\{
\fp  \in {\cal{P}}(\psi):
\deg p = x,
|b_\fp| \geq
q^{\frac{f(\deg p)}{2}} 
\right\}
=
\o(\pi_F(x))).
\end{equation}

To estimate from above the left hand side of (\ref{bp-little-o}), our strategy is to partition the primes $\fp$ according to
the value $m$ taken by $b_\fp$, and then to relax the equality $b_\fp = m$ to the divisibility $m \mid b_\fp$.
This strategy leads us to the problem of understanding 
the divisibility properties 
of the polynomials $b_{\mathfrak{p}}$, which 
 was already explored in \cite{CoPa15}, as  we recall below.
 
 Let $m \in A$ be a monic polynomial  
and denote by $F(\psi[m])$ the field obtained by adjoining to $F$ the elements of $\psi[m]$.
We obtain a finite, Galois extension of $F$, whose Galois group $\Gal(F(\psi[m])/F)$ may be viewed as a subgroup
of $\GL_2(A/m A)$. 
An important subfield of  $F(\psi[m])$
is
 $$
 J_m :=
 \left\{
 z \in F(\psi[m]):
 \sigma(z) = z \  \forall \sigma \in \Gal(F[\psi[m]/F) \ \text{a scalar element}
 \right\},
 $$
 which is itself a Galois extension of $F$
 whose Galois group $\Gal(J_m/F)$ may be viewed as a subgroup of $\PGL_2(A/m A)$.
 As  a consequence of \cite[Thm. 1]{CoPa15}, we have that, for any $\fp \in \cal{P}(\psi)$ with
 $\fp \nmid m$,
 \begin{equation}\label{bp-mod-m}
 m \mid b_\fp
 \
 \Leftrightarrow
 \
 \fp \ \text{splits completely in} \ J_m/F.
 \end{equation}
 Thus the divisibility $m \mid b_{\fp}$ may be studied via the Chebotarev density theorem.
 
 Indeed, denoting by 
 $$
 \Pi_1(x, J_m/F) :=
 \#\left\{
\fp \in {\cal{P}}(\psi):
\deg p = x,
\fp\ \text{splits completely in} \ J_m/F
 \right\},
$$
it was proved in \cite[Thm. 15]{CoPa15}
 that an effective version of the Chebotarev density theorem applied to the extension
 $J_m/F$ gives
 \begin{equation}\label{Jm-chebotarev}
\Pi_{1}(x, J_m/F)
=
\frac{c_{m}(x)}{[J_m : F]} \cdot \frac{q^x}{x}
+
\O_{\psi}\left( q^{\frac{x}{2}} \deg m\right),
\end{equation}
where
$$
c_m := \left[J_m \cap \overline{\F}_q : \F_q\right]
$$
and
\begin{equation}\label{cm-x-def}
c_m(x)
:=
\left\{
\begin{array}{cc}
c_{m} & \textrm{if } c_{m} \mid x, \\
  0  &    \textrm{otherwise}.
\end{array}
\right.
\end{equation}
Furthermore, as pointed out in \cite[Thm. 15]{CoPa15}, we have
that
\begin{equation}\label{cm-bound}
c_m \ll_{\psi} 1,
\end{equation}
regardless of the structure of $\End_{F^{\text{alg}}}(\psi)$.
Finally, 
under the assumption $\End_{F^{\text{alg}}}(\psi) = A$, which we now make,
 it was proved in \cite[Cor. 3]{CJ20} that
\begin{equation}\label{Jm-degree}
|m|^3
\ll_{\psi}
[J_m : F]
\leq
|m|^3.
\end{equation}
(Note that only the lower bound requires the assumption on the endomorphism ring.)

We also remark that the condition $m \mid b_\fp$ for some $\fp$ with $\deg p = x$ implies that 
\begin{equation}\label{m-divides-bp-bound}
\deg m \leq \frac{\deg \left(a_\fp^2 - 4 \mu_\fp p\right) }{2} \leq \frac{x}{2},
\end{equation}
since $\deg a_\fp \leq \frac{\deg p}{2}$.

Now let us apply the aforementioned strategy to the left hand side of (\ref{bp-little-o}).
After partitioning the primes $\fp$
according to the values taken by $b_\fp$,
relaxing the equality $b_{\mathfrak{p}} = m$ to the divisibility $m \mid b_\fp$,
and
using (\ref{bp-mod-m}) and (\ref{m-divides-bp-bound}),
we obtain that
\begin{eqnarray}\label{sum-large-bp}
\hspace*{-0.5cm}
\#\left\{
\fp  \in {\cal{P}}(\psi):
\deg p = x,
|b_\fp| \geq
q^{\frac{f(\deg p)}{2}} 
\right\}
&\leq&
\ds\sum_{
m \in A
\atop{
m \ \text{monic}
\atop{
\frac{f(x)}{2} \leq \deg m \leq \frac{x}{2}
}
}
}
\#\left\{
\fp  \in {\cal{P}}(\psi):
\deg p = x,
m \mid b_\fp
\right\}
\nonumber
\\
&=&
\ds\sum_{
m \in A
\atop{
m \ \text{monic}
\atop{
\frac{f(x)}{2} \leq \deg m \leq y
}
}
}
\Pi_1(x, J_m/F)
\nonumber
\\
&+&
\ds\sum_{
m \in A
\atop{
m \ \text{monic}
\atop{
y <  \deg m \leq \frac{x}{2}
}
}
}
\#\left\{
\fp \in {\cal{P}}(\psi):
\deg p = x,
m^2 \mid \left(a_\fp^2 - 4 \mu_{\mathfrak{p}} p\right)
\right\},
\end{eqnarray}
where $y = y(x)$ is a parameter satisfying $f(x) \leq 2 y \leq x$,  
with the role of delimitating the range for which the interpretation of the divisibility $m \mid b_\fp$
as a splitting completely condition as in  (\ref{bp-mod-m})
can be quantified via a direct application of the Chebotarev density theorem.
Using 
(\ref{Jm-chebotarev}), (\ref{cm-bound}), (\ref{Jm-degree}), 
and
proceeding similarly to the proof of \cite[(39)]{CoPa15}, 
we deduce that
\begin{eqnarray}
\label{sum-chebotarev-estimated1}
\ds\sum_{
m \in A
\atop{
m \ \text{monic}
\atop{
\frac{f(x)}{2} \leq \deg m \leq y
}
}
}
\Pi_1(x, J_m/F)
&=&
\frac{q^x}{x}
\ds\sum_{
m \in A^{(1)}
\atop{
\frac{f(x)}{2} \leq \deg m
}
}
\frac{c_m(x)}{[J_m : F]} 
+
\O_{\psi}\left(q^{\frac{x}{2} +y}\right)
+
\O_{\psi}\left(q^{x-2 y}\right)
%\nonumber
\\
\label{sum-chebotarev-estimated2}
&\ll_{\psi}&
q^{x - f(x)} + q^{\frac{x}{2} + y}.
\end{eqnarray}
In particular, we see that we need that the parameter $y = y(x)$ satisfy $y < \frac{x}{2}$.

To estimate the sum over $m \in A$ with $y <  \deg m \leq \frac{x}{2}$,  
we appeal to the more elaborate application of 
(\ref{Jm-chebotarev}), (\ref{cm-bound}), (\ref{Jm-degree}) via the Square Sieve.
Specifically, proceeding similarly to the proof of \cite[(41)]{CoPa15}, we deduce that,
for any $\varepsilon > 0$,
\begin{eqnarray}\label{sum-square-estimated}
\ds\sum_{
m \in A
\atop{
m \ \text{monic}
\atop{
y <  \deg m \leq \frac{x}{2}
}
}
}
\#\left\{
\fp \in {\cal{P}}(\psi):
\deg p = x,
m^2 \mid \left(a_\fp^2 - 4 \mu_{\mathfrak{p}} p\right)
\right\}
\ll_{\psi, \varepsilon}
q^{\frac{15 x}{8} - 2 y + x \varepsilon} x^3.
\end{eqnarray}
(Note that while in \cite{CoPa15} the occuring $m$ is squarefree, this arithmetic feature of $m$ is not used in the estimates therein; as such, (\ref{sum-chebotarev-estimated2}) and (\ref{sum-square-estimated}) hold also when $m$ has square factors.)

Choosing $y = y(x) := \frac{(11 + \varepsilon) x}{24}$
and recalling that $\ds\lim_{x \rightarrow \infty} f(x) = \infty$, we deduce from 
(\ref{sum-large-bp}), (\ref{sum-chebotarev-estimated2}),
and (\ref{sum-square-estimated})
that
\begin{eqnarray}\label{main-error}
\#\left\{
\fp  \in {\cal{P}}(\psi):
\deg p = x,
|b_\fp| \geq
q^{\frac{f(\deg p)}{2}} 
\right\}
\ll_{\psi, \varepsilon}
q^{x - f(x)} + q^{\frac{23 x}{24} + \varepsilon x}
=
 \o \left(\frac{q^x}{x}\right),
\end{eqnarray}
which confirms (\ref{bp-little-o}).

Now let us prove that the set
$
\left\{
\fp \in  \cP(\psi):
 |\Delta_{\fp}|
 >
\frac{|a_\fp^2 - 4 \mu_\fp p|}{q^{f(\deg p)}} 
\right\}
$
has Dirichlet density $1$.
For this, let $s > 1$ and consider the sum
\begin{eqnarray*}
\ds\sum_{
\fp \in {\cal{P}}(\psi)
\atop{
|\Delta_\fp| \leq \frac{|a_\fp^2 - 4 \mu_\fp p|}{q^{f(\deg p)}}
} 
}
q^{-s \deg p}
&=&
\ds\sum_{x \geq 1}
q^{-s x}
\#\left\{
\fp \in {\cal{P}}(\psi):
\deg p = x,
|\Delta_\fp| \leq \frac{|a_\fp^2 - 4 \mu_\fp p|}{q^{f(\deg p)}}
\right\}
\\
&=&
\ds\sum_{x \geq 1}
q^{-s x}
\#\left\{
\fp \in {\cal{P}}(\psi):
\deg p = x,
|b_\fp| \geq q^{\frac{f(x)}{2}}
\right\}.
\end{eqnarray*}
By (\ref{sum-large-bp}), (\ref{sum-chebotarev-estimated1}), (\ref{sum-square-estimated}), and the earlier choice of
$y$ we obtain that the above is
\begin{eqnarray*}
&\leq&
\ds\sum_{x \geq 1}
\frac{q^{(1-s) x}}{x}
\ds\sum_{
m \in A
\atop{
m \ \text{monic}
\atop{
\frac{f(x)}{2} \leq \deg m \leq \frac{x}{2}
}
}
}
\frac{c_m(x)}{[J_m:F]}
+
\O_{\psi, \varepsilon}\left(
\ds\sum_{x \geq 1}
q^{\left(\frac{23}{24} + \varepsilon - s\right) x }
\right).
\end{eqnarray*}

To estimate the first term, 
we first use (\ref{cm-x-def}) to rewrite $c_m(x)$:
\begin{eqnarray*}
T_1
&:=&
\ds\sum_{x \geq 1}
\frac{q^{(1-s) x}}{x}
\ds\sum_{
m \in A
\atop{
m \ \text{monic}
\atop{
\frac{f(x)}{2} \leq \deg m \leq \frac{x}{2}
}
}
}
\frac{c_m(x)}{[J_m:F]}
\\
&=&
\ds\sum_{x \geq 1}
\frac{q^{(1-s) x}}{x}
\ds\sum_{
m \in A
\atop{
m \ \text{monic}
\atop{
\frac{f(x)}{2} \leq \deg m \leq \frac{x}{2}
\atop{
c_m \mid x
}
}
}
}
\frac{c_m}{[J_m:F]}
\\
&=&
\ds\sum_{
m \in A
\atop{
m \ \text{monic}
}
}
\ds\sum_{
j \geq 1
\atop{
\frac{f(c_m j)}{2} \leq \deg m \leq \frac{c_m j}{2}
}
}
\frac{q^{(1-s) c_m j}}{j [J_m : F]}.
\end{eqnarray*}
Next we fix $M > 0$ and observe that, since $\ds\lim_{x \rightarrow \infty} f(x) = \infty$,
there exists $n(M) \in \N$ such that
\begin{equation}\label{f(n)}
f(n) > M \quad \forall n \geq n(M).
\end{equation}
Now we split the sum over $j$ in our last reformulation of $T_1$ according to whether
$c_m j \geq n(M)$ or $c_m j < n(M)$. 
For the first range,
we obtain
\begin{eqnarray*}
T_{1, 1}
&:=&
\ds\sum_{m \in A
\atop{m \ \text{monic}}
}
\ds\sum_{
j \geq 1
\atop{
c_m j \geq n(M)
\atop{
\frac{f(c_m j)}{2} \leq \deg m \leq \frac{c_m j}{2}
}
}
}
\frac{q^{(1-s) c_m j}}{j [J_m : F]}
\\
&\leq&
\ds\sum_{
m \in A
\atop{
m \ \text{monic}
\atop{
\frac{M}{2} \leq \deg m 
}
}
}
\frac{1}{[J_m : F]}
\ds\sum_{
j \geq 1
}
\frac{q^{(1-s) c_m j}}{j}
\\
&\ll_{\psi}&
\ds\sum_{
m \in A
\atop{
m \ \text{monic}
\atop{
\frac{M}{2} \leq \deg m 
}
}
}
\frac{1}{[J_m : F]}
\ds\sum_{
j \geq 1
}
\frac{q^{(1-s)  j}}{j}
\\
&\ll_{\psi}&
\ds\sum_{
m \in A
\atop{
m \ \text{monic}
\atop{
\frac{M}{2} \leq \deg m
}
}
}
\frac{
\log \deg m + \log \log q
}{
|m|^3
}
\ds\sum_{j \geq 1}
\frac{q^{(1-s) %c_m
 j }}{j}
 \\
 &\ll&
 \frac{
 \log \frac{M}{2}
 }{
 q^{M} \log q
 }
 \
 \left|
 \log \left(
 1 - q^{1-s}
 \right)
 \right|,
\end{eqnarray*}
where 
we used
(\ref{f(n)}) to pass to the second line,
(\ref{cm-bound}) to pass to the third line,
(\ref{Jm-degree}) to pass to the fourth line, 
and $s > 1$, together with  \cite[Lem. 2.2]{CoSh15} 
 to pass to the fifth line.
It follows that
\begin{equation}\label{T11-estimate}
\ds\lim_{s \rightarrow 1+}
\frac{T_{1, 1}}{- \log \left(1 - q^{1-s}\right)}
\ll_{\psi}
\frac{\log \frac{M}{2} }{q^{M} \log q}.
\end{equation}
For the second range, we investigate
\begin{eqnarray*}
T_{1, 2}
&:=&
\ds\sum_{m \in A
\atop{
m \ \text{monic}
}
}
\ds\sum_{
j \geq 1
\atop{
c_m j < n(M)
\atop{
\frac{f(c_m j)}{2} \leq \deg m \leq \frac{c_m j}{2}
}
}
}
\frac{q^{(1-s) c_m j}}{j [J_m : F]}
\end{eqnarray*}
and observe that it is a finite sum 
since $\deg m \leq \frac{c_m j}{2} \ll_{\psi} j$ (recalling (\ref{cm-bound}))
and
since $j < n(M)$.
Observing that
$\ds\lim_{s \rightarrow 1+} \frac{q^{(1-s) \alpha}}{\log \left(1 - q^{1-s}\right)}= 0$ 
for any $\alpha \geq 1$,
it follows that
\begin{equation}\label{T12-estimate}
\ds\lim_{s \rightarrow 1+}
\frac{T_{1, 2}}{- \log \left(1 - q^{1-s}\right)}
= 0.
\end{equation}

To estimate the remaining term
\begin{eqnarray*}
T_2
&:=&
\ds\sum_{x \geq 1}
q^{\left(\frac{23}{24}  + \varepsilon - s\right) x },
\end{eqnarray*}
observe that
\begin{eqnarray}\label{T2-estimate}
\ds\lim_{s \rightarrow 1+}
\frac{T_2}{- \log \left(1 - q^{1-s}\right)}
=
- 
\ds\lim_{s \rightarrow 1+}
\frac{q^{\left(\frac{23}{24} + \varepsilon - s\right) x}}{
\left(
1 - q^{\left(\frac{23}{24} + \varepsilon - s\right)}
\right)
\log
\left(
1 - q^{(1 -s) x}
\right)
}
= 0.
\end{eqnarray}

We now take $M \rightarrow \infty$ in (\ref{f(n)}) and put together 
(\ref{T11-estimate}), (\ref{T12-estimate}),  (\ref{T2-estimate}),
completing the proof of Theorem \ref{thm3}.

%*************************************
%*************************************
%*************************************
\section*{Acknowledgments}
%*************************************
%*************************************
%*************************************
We thank Zeev Rudnick for his comments on an earlier version of Theorem \ref{thm1}
which prompted us to obtain an improved bound.

%-------------------------------------------------------------

\end{document}